\let\mathnumsetfont\mathbb
\newcommand\Rset{\mathnumsetfont R} 
\newtheorem{teo*}{Theorem}
\newtheorem{proposition}{Proposition}
\def\barF{\overline{F}}
\begin{document}

\title{A New Family of Asymmetric Distributions for Modeling Light-Tailed and Right-Skewed Data}
\author{Meitner Cadena \\
Departamento de Ciencias Exactas, Universidad de las Fuerzas Armadas - ESPE \\
Sangolqu\'{\i}, Ecuador
}
\maketitle

\begin{abstract}
A new three-parameter cumulative distribution function defined on $(\alpha,\infty)$, for some $\alpha\geq0$, with asymmetric probability density function and showing exponential decays at its both tails, is introduced.
The new distribution is near to familiar distributions like the gamma and log-normal distributions, but this new one shows own elements and thus does not generalize neither of these distributions.
Hence, the new distribution constitutes a new alternative to fit values showing light-tailed behaviors.
Further, this new distribution shows great flexibility to fit the bulk of data by tuning some parameters.
We refer to this new distribution as the generalized exponential log-squared distribution (GEL-S).
Statistical properties of the GEL-S distribution are discussed.
The maximum likelihood method is proposed for estimating the model parameters, but incorporating adaptations in computational procedures due to difficulties in the manipulation of the parameters.
The performance of the new distribution is studied using simulations.
Applications to real data sets coming from different domains are showed.

\begin{center}
Key words: Asymmetric distribution, Maximum likelihood method, Simulation, Light-tailed, Right-skewed
\end{center}
\end{abstract}


\section{Introduction} 


In a number of domains such as medical applications, atmospheric sciences, microbiology, environmental science, and reliability theory among others, data are positive, right-skewed, with their highest values decaying exponentially.
Among the most suitable models used by researchers and practitioners to deal with this kind of data are usually parametric distributions as the log-normal, gamma and Weibull distributions.
However, known distributions are not always enough to reach a good fit to the data.
This has motivated the interest in the development of more flexible and better adapted distributions, 
which have been generated using different strategies such as the combination of known distributions \cite{RubioHong2016}, introduction of new parameters in given distributions \cite{MarshallOlkin1997}, transformation of known distributions \cite{GuptaGuptaGupta1998}, and junction of two distributions by splicing \cite{RubioSteel2014}.

In this paper, a new procedure to develop new distributions is proposed.
The aim is to guarantee that a probability density function (pdf) $f(x)$ defined for $x>\alpha$, for some $\alpha\in\Rset$, exponentially decays to 0 as $x\to\alpha^+$ and as $x\to\infty$.
An advantage of this condition is that this itself still holds if any polynomial $x^\beta$ with $\beta\in\Rset$ is included as a factor in such pdf.
In this way, the new distribution would have great flexibility in neighborhoods of 0 and $\infty$ by controlling $\beta$,
thus capturing a wide variety of shapes and tail behaviors.
We refer to this new distribution as the generalized exponential log-squared distribution (GEL-S).

The above-mentioned features for pdfs are also satisfied by the log-normal and related distributions.
Further, the log-normal distribution may be considered a particular case of GEL-S,
however, as will be seen later, the latter does not generalize the log-normal distribution.

The aim of this paper is two-fold.
First, to study statistical properties of the GEL-S distribution and methods for estimating its parameters.
Second, to provide empirical evidence on the great flexibility of the GEL-S distribution to fit real light-tailed and right-skewed data from different domains.
For numerical assessments, the implementation of this model is done using functions in the R software \cite{RCoreTeam}.

In the next section, the pdf associated to the new three-parameter distribution is introduced by considering the condition on pdfs indicated above, and explicit expressions of its cumulative distribution function (cdf) and survival function (sf) are provided in some cases.
Further, closeness of the new distribution with well-known distributions is discussed.
Section \ref{sec2} presents statistical properties of the new distribution.
Section \ref{sec3} is devoted to the maximum likelihood method for estimating the parameters of the new distribution.
In Section \ref{sec4}, the performance of the parameter estimation method is studied using simulations.
Section \ref{sec5} shows applications of the new distributions to real data sets coming from different domains.
Section \ref{sec6} concludes the paper presenting discussions and conclusions and next further steps.
Proofs are presented in the annexe.

\section{The Generalized Exponential Log-Squared Distribution}

In this section, the GEL-S distribution is introduced.
The pdf of the new cdf is defined by
$$
f(x):=C\,x^\beta e^{-(2\gamma^2)^{-1}\left(\log(x-\alpha)\right)^2},\quad x>\alpha,\ \textrm{with $\alpha\geq0$, $\beta\in\Rset$ and $\gamma>0$},
$$
where $C$ is the normalizing constant.

This function holds exponential decays at its tails:
writing
$$
x^\beta e^{-(2\gamma^2)^{-1}\left(\log(x-\alpha)\right)^2}
=e^{-\left(\log(x-\alpha)\right)^2\left((2\gamma^2)^{-1}-\beta\,\frac{\log x}{\left(\log(x-\alpha)\right)^2}\right)}
$$
and noting that, if $\alpha=0$,
$$
\lim_{x\to\alpha^+}\frac{\log x}{\left(\log(x-\alpha)\right)^2}
=\lim_{x\to0^+}\frac{1}{\log x}
=0,
$$
or if $\alpha>0$,
$$
\lim_{x\to\alpha^+}\frac{\log x}{\left(\log(x-\alpha)\right)^2}
=\log\alpha\times\lim_{x\to\alpha^+}\frac{1}{\left(\log(x-\alpha)\right)^2}=0,
$$
and, by applying the L'H\^{o}pital rule,
$$
\lim_{x\to\infty}\frac{\log x}{\left(\log(x-\alpha)\right)^2}
=\frac{1}{2}\lim_{x\to\infty}\frac{1}{\log(x-\alpha)}
=0,
$$
then we have, for any $\beta\in\Rset$,
$$
\lim_{x\to\alpha^+}f(x)=0,\quad
\lim_{x\to\infty}f(x)=0.
$$
Further, this means that both tails of this function are light \cite{BinghamGoldieTeugels,resnick}, which implies that $f$ reaches 0 very fast when $x\to\alpha^+$ or $x\to\infty$.

Due to difficulties in the manipulation of $f$ for any $\beta\in\Rset$, for instance for computing integrals of this function, 
this study was limited to cases when $\beta$ takes non-negative integer values.
Therefore, in this paper, the definition of the pdf will be considered as
\begin{equation}\label{deff}
f(x):=C\,x^k e^{-(2\gamma^2)^{-1}\left(\log(x-\alpha)\right)^2},\quad x>\alpha,\ \textrm{with $\alpha\geq0$, $k=0,1,2,\ldots,$ and $\gamma>0$},
\end{equation}
where $C=\left(\gamma\sqrt{2\pi}\,\sum_{i=0}^k{{k}\choose{i}}\alpha^{k-i}e^{(i+1)^2\gamma^2/2}\right)^{-1}$ is the normalizing constant.
The deduction of $C$ is presented in Annexe \ref{Proofs}.

The cdf is then, for $x>\alpha$,
\begin{eqnarray}
F(x) & := & \int_\alpha^x f(z)\,dz \nonumber \\
 & = & \gamma C\sqrt{2\pi}\,\sum_{i=0}^k{{k}\choose{i}}\alpha^{k-i}e^{(i+1)^2\gamma^2/2}\Phi\left(\frac{1}{\gamma}\left(\log(x-\alpha)-(i+1)\gamma^2\right)\right),
\label{defF}
\end{eqnarray}
where $\Phi$ is the cdf of a standard normal random variable (rv).
The deduction of $F$ is presented in Annexe \ref{Proofs}.
From (\ref{defF}) the sf $\barF$ associated to $F$ may be deduced by using its definition $\barF:=1-F$, but following similar computations as in the deduction of $F$ and using the property $1-\Phi(x)=\Phi(-x)$, the following expression for $\barF$ is obtained, for $x>\alpha$:
\begin{eqnarray*}
\lefteqn{\barF(x)=\int_x^\infty f(z)\,dz} \\
 & & =\gamma C\sqrt{2\pi}\,\sum_{i=0}^k{{k}\choose{i}}\alpha^{k-i}e^{(i+1)^2\gamma^2/2}\Phi\left(-\frac{1}{\gamma}\left(\log(x-\alpha)-(i+1)\gamma^2\right)\right).
\end{eqnarray*}

Relating $f$ with the pdf of a log-normal distribution with parameters $\mu$ and $\sigma^2$, writing
$$
x^k\,e^{-(2\gamma^2)^{-1}\left(\log x\right)^2}
=e^{2^{-1}\gamma^2(k+1)^2}\,x^{-1}\,e^{-(2\gamma^2)^{-1}\left(\log x-\gamma^2(k+1)\right)^2},
$$
gives that the former distribution becomes the latter one if $\alpha=0$, $\gamma=\sigma$, and $\gamma=\sqrt{\mu\big/(k+1)}$.
Hence, the log-normal distribution is a particular case of the GEL-S distribution, implying that the GEL-S distribution might thus inherit the importance that the log-normal distribution has taken to model data \cite{CrowShimizu,LimpertStahelAbbt2001}.
However, the new distribution is not an extension of the log-normal distribution since the latter is built when considering the rv $\log X$ where $X$ is a rv following a normal distribution, but the introduction of $x=e^y$ in $F(x)$ gives an expression that is not related to any expression based on normal rvs.
The reader is referred to \cite{Yuan1933,CohenWhitten1980,GuptaLvin2005,SinghSharmaRathiSingh2012} for further details on the log-normal distribution and its generalizations.

As discussed above, the GEL-S distribution is close to the log-normal distribution.
Other pdfs that are close to the new pdf in terms of their structures are presented in Table \ref{tab01}, where the new pdf is included in order to appreciate similarities and differences among them.
Through these pdfs two main functions multiplying each other are identified: one that is based on the exponential function and a second function formed by the remaining part.
According to the functions based on the exponential function, the GEL-S, two-parameter log-normal and three-parameter log-normal distributions are very similar to each other, whereas on the second functions the GEL-S and gamma distributions are close to each other.

\begin{table}[!ht]
\centering
\begin{tabular}{lccc}
\hline
\multicolumn{1}{c}{Distribution} & Parameters & Support & pdf \\
\hline
GEL-S & $\alpha\geq0$, $k=0,1,2,\ldots$, $\gamma>0$ & $x>\alpha$ & $C\,x^k\, e^{-\frac{\left(\log(x-\alpha)\right)^2}{2\gamma^2}}$ \\
Two-parameter log-normal & $\mu\in\Rset$, $\sigma>0$ & $x>0$ & $\frac{x^{-1}}{\sigma\sqrt{2\pi}}e^{-\frac{(\log x-\mu)^2}{2\sigma^2}}$ \\
Three-parameter log-normal & $\delta,\mu\in\Rset$, $\sigma>0$ & $x>\delta$ & $\frac{(x-\delta)^{-1}}{\sigma\sqrt{2\pi}}e^{-\frac{(\log(x-\delta)-\mu)^2}{2\sigma^2}}$ \\
Gamma & $\alpha,\beta>0$ & $x>0$ & $\frac{\beta^\alpha}{\Gamma(\alpha)}x^{\alpha-1}e^{-\beta x}$ \\
\hline
\end{tabular}
\caption{Distributions that are close to the GEL-S distribution}
\label{tab01}
\end{table}

Plots of pdfs and cdfs of the GEL-S, two-parameter log-normal and three-parameter log-normal distributions are exhibited in Fig. \ref{fig01}.
Left plots concern pdfs and right plots their corresponding cdfs.
In top plots, the GEL-S and two-parameter log-normal distributions are compared by varying their parameters.
Note that the supports of the positive parts of the pdfs and cdfs are not the same for both distributions: the one of the log-normal distribution begins at $x=0^+$ and is in general slightly wider than that of the GEL-S distribution that begins at $x=\alpha^+>0$.
In these plots, the cdf and pdf of the log-normal distributions are surrounded by the ones of the GEL-S distributions, reflecting the fact that the two-parameter log-normal distribution is a particular case of the GEL-S distribution as discussed above.
This enclosure is done by varying $\gamma$ of the GEL-S distribution.
In bottom plots, the GEL-S and three-parameter log-normal distributions are compared as in the previous comparisons, but considering the same support for both distributions by taking $\alpha=\delta$.
Now, the cdf and pdf of the log-normal distribution are partially surrounded by the ones of the GEL-S distribution, namely at the right side of the curves.

\begin{figure}[!ht]
\centering \includegraphics[scale=0.30]{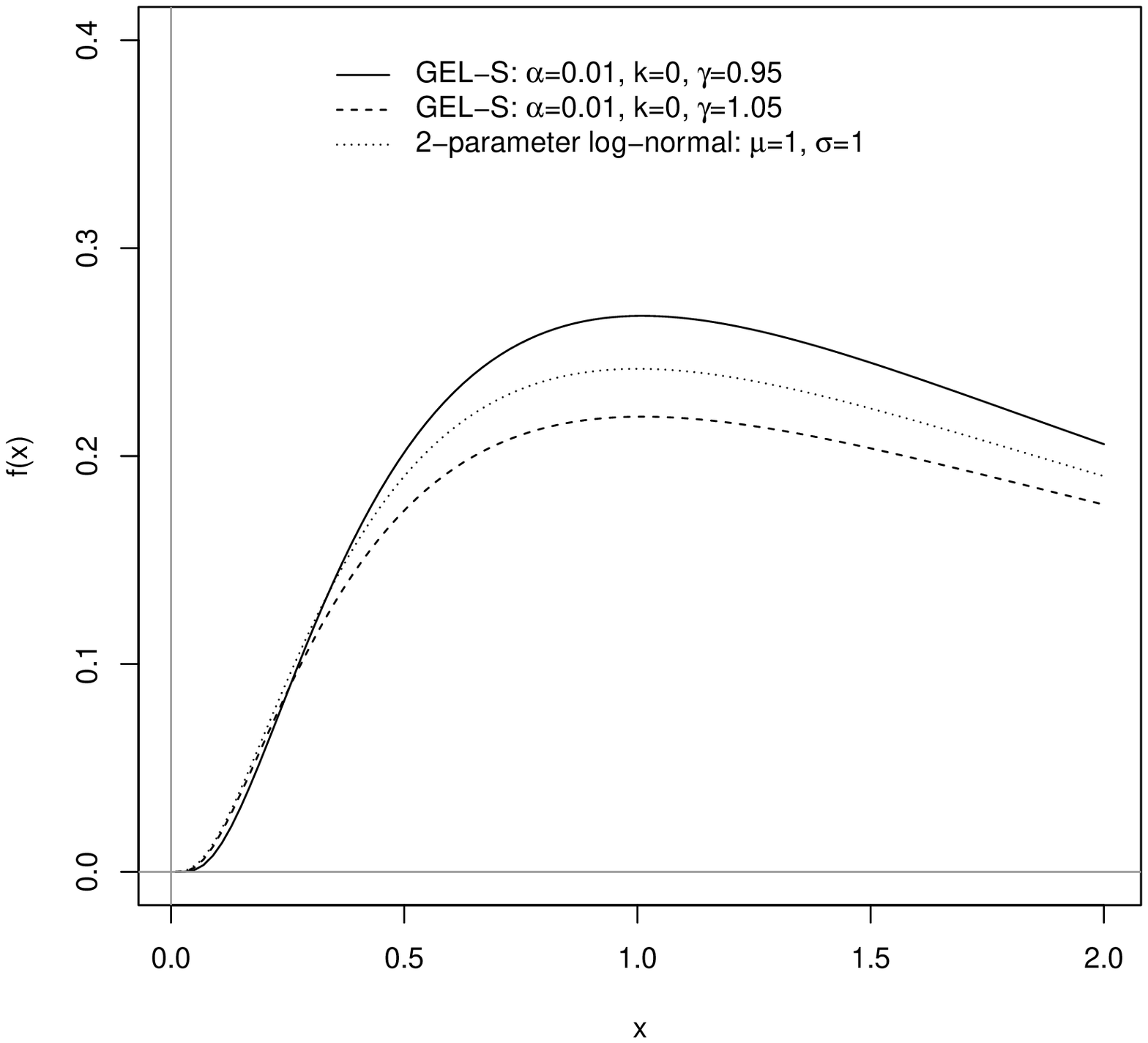} 
\centering \includegraphics[scale=0.30]{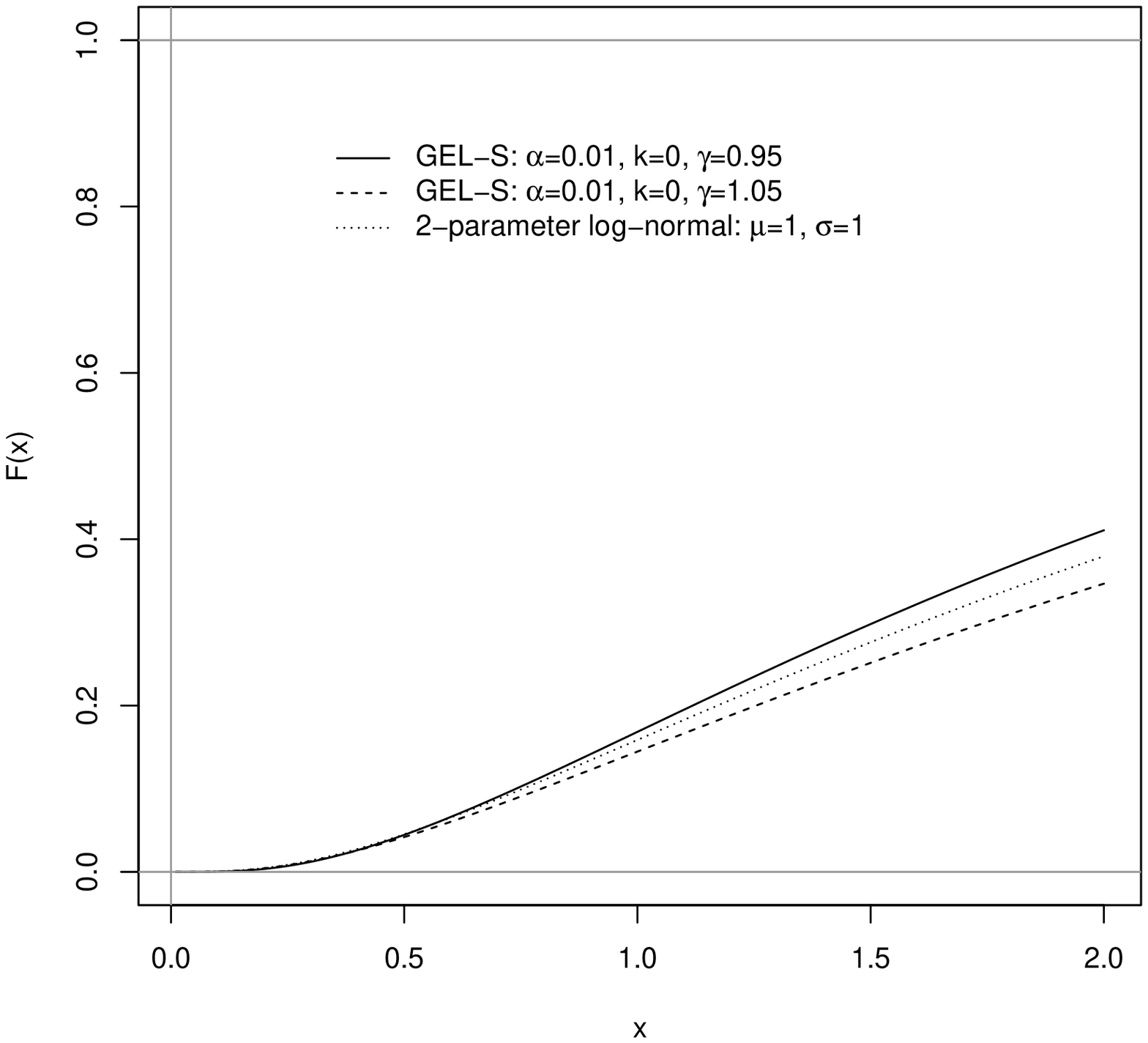} 
\centering \includegraphics[scale=0.30]{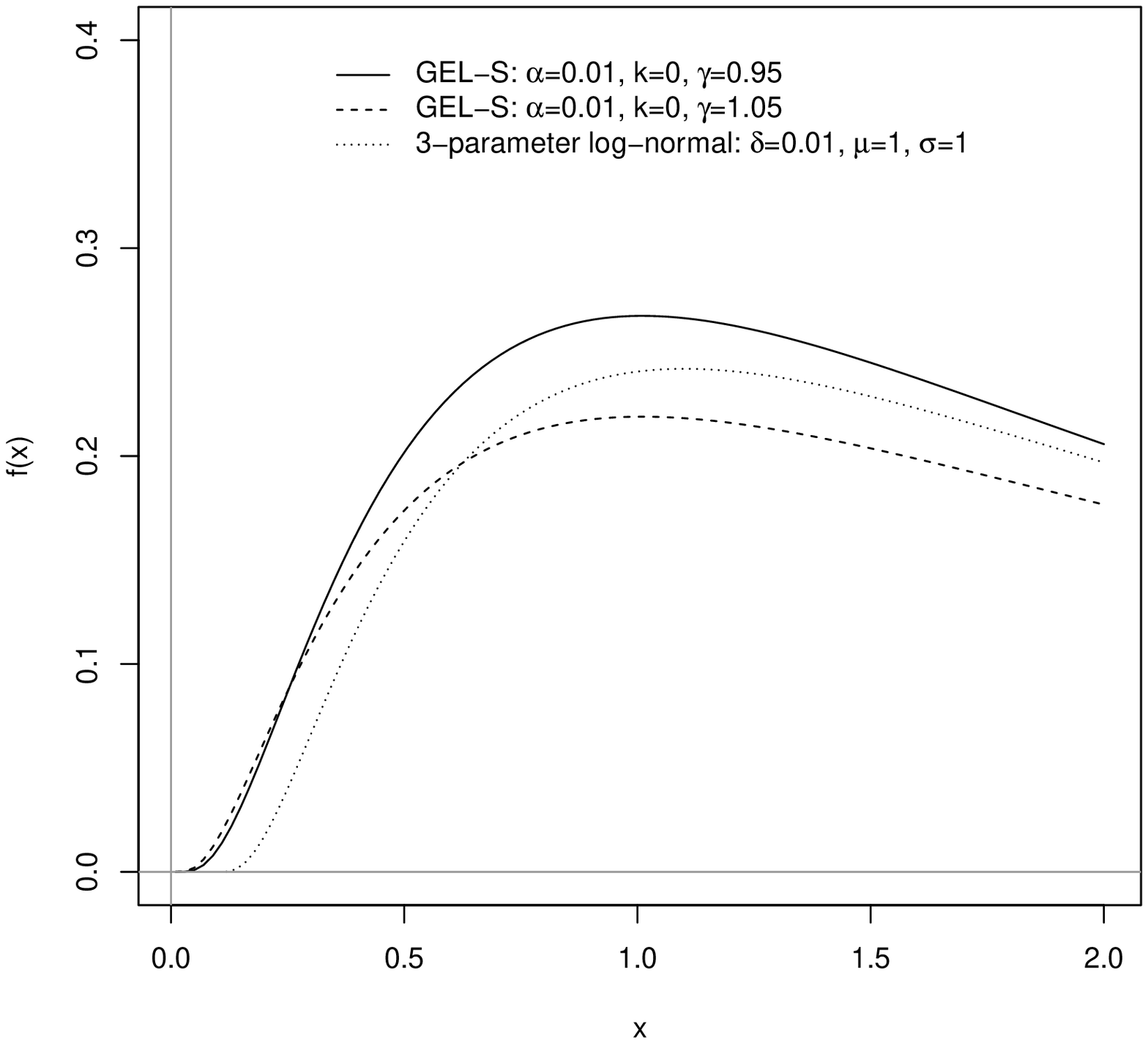} 
\centering \includegraphics[scale=0.30]{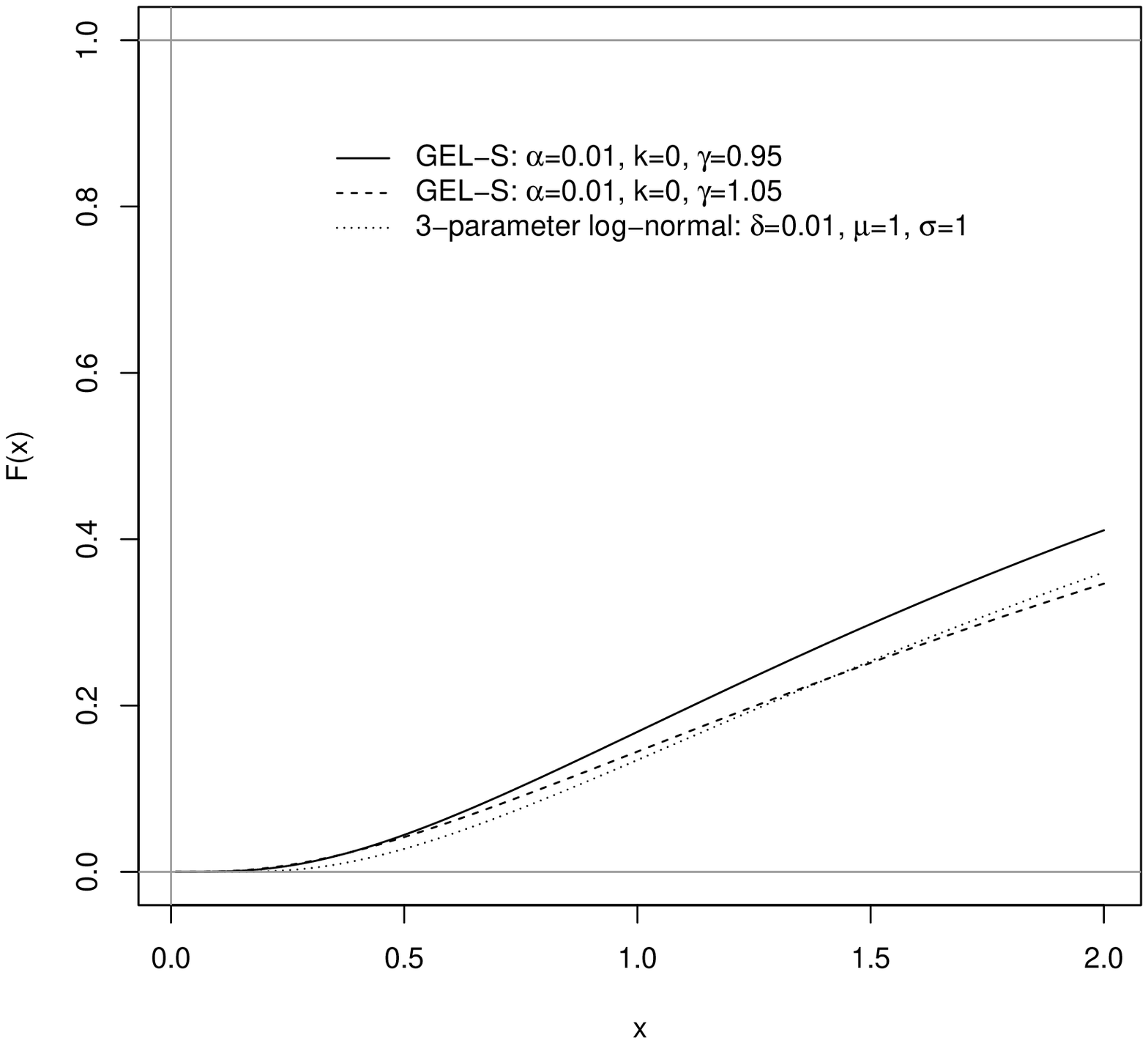} 
\caption{Comparisons of pdfs (left plots) and cdfs (right plots) associated to GEL-S and two-parameter log-normal (top plots) and to GEL-S and three-parameter log-normal (bottom plots) distributions}
\label{fig01} 
\end{figure}

Fig \ref{fig02} presents curves of pdfs and cdfs of GEL-S distributions by varying parameters.
Left plots concern pdfs and right plots their corresponding cdfs.
Each row shows plots where only one parameter varies: $\alpha$ for top plots, $k$ for middle plots, and $\gamma$ for bottom plots.
These plots show that the increase of $\alpha$, $k$, or $\gamma$ always promote the flattening of pdfs.
On the other hand, the increase of $\alpha$ shifts the pdfs and cdfs to the right with slight increases in the heights of the pdfs, whereas
the increase of $\gamma$ increases the right skewness of the pdfs.

\begin{figure}[!ht]
\centering \includegraphics[scale=0.30]{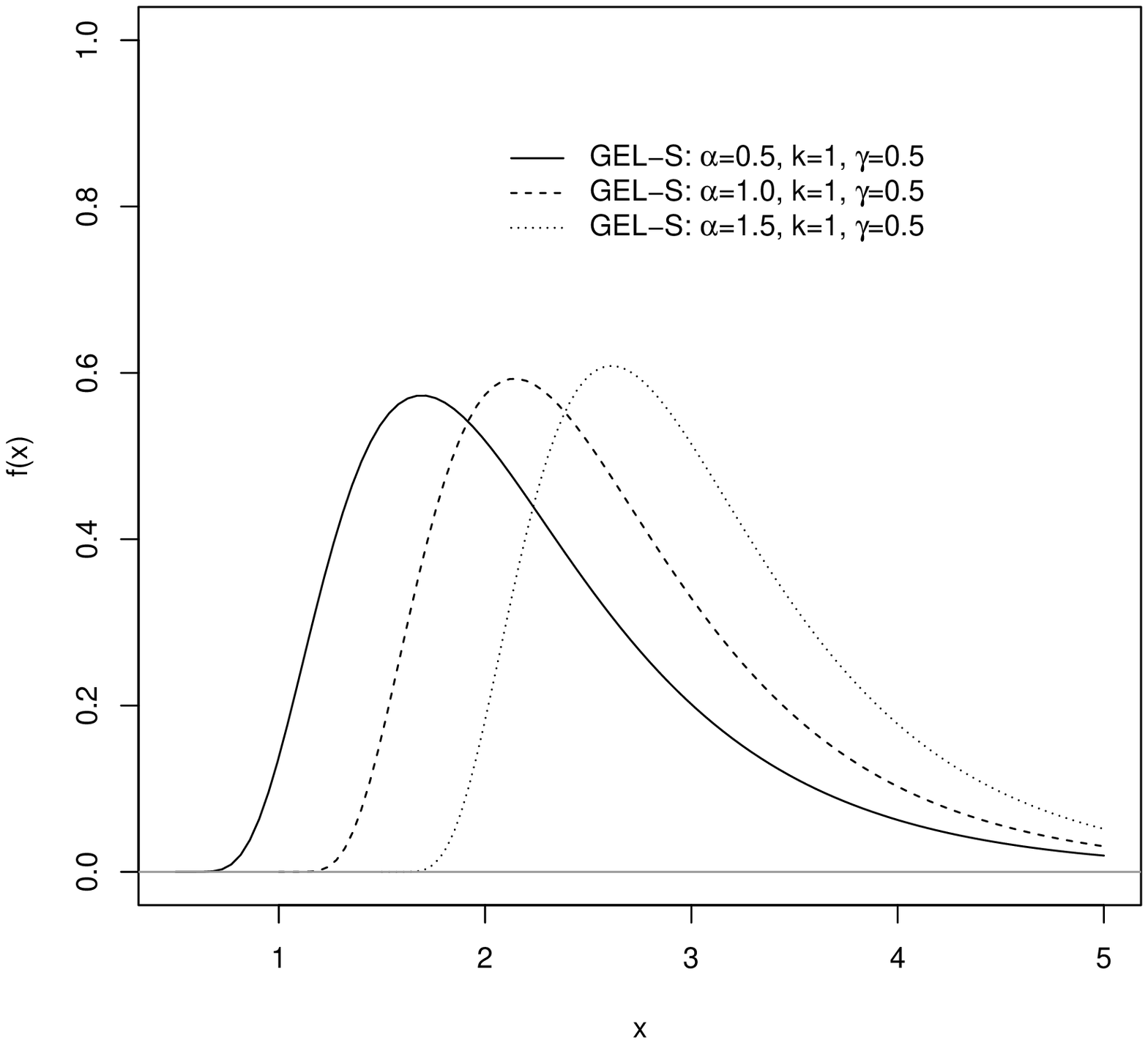} 
\centering \includegraphics[scale=0.30]{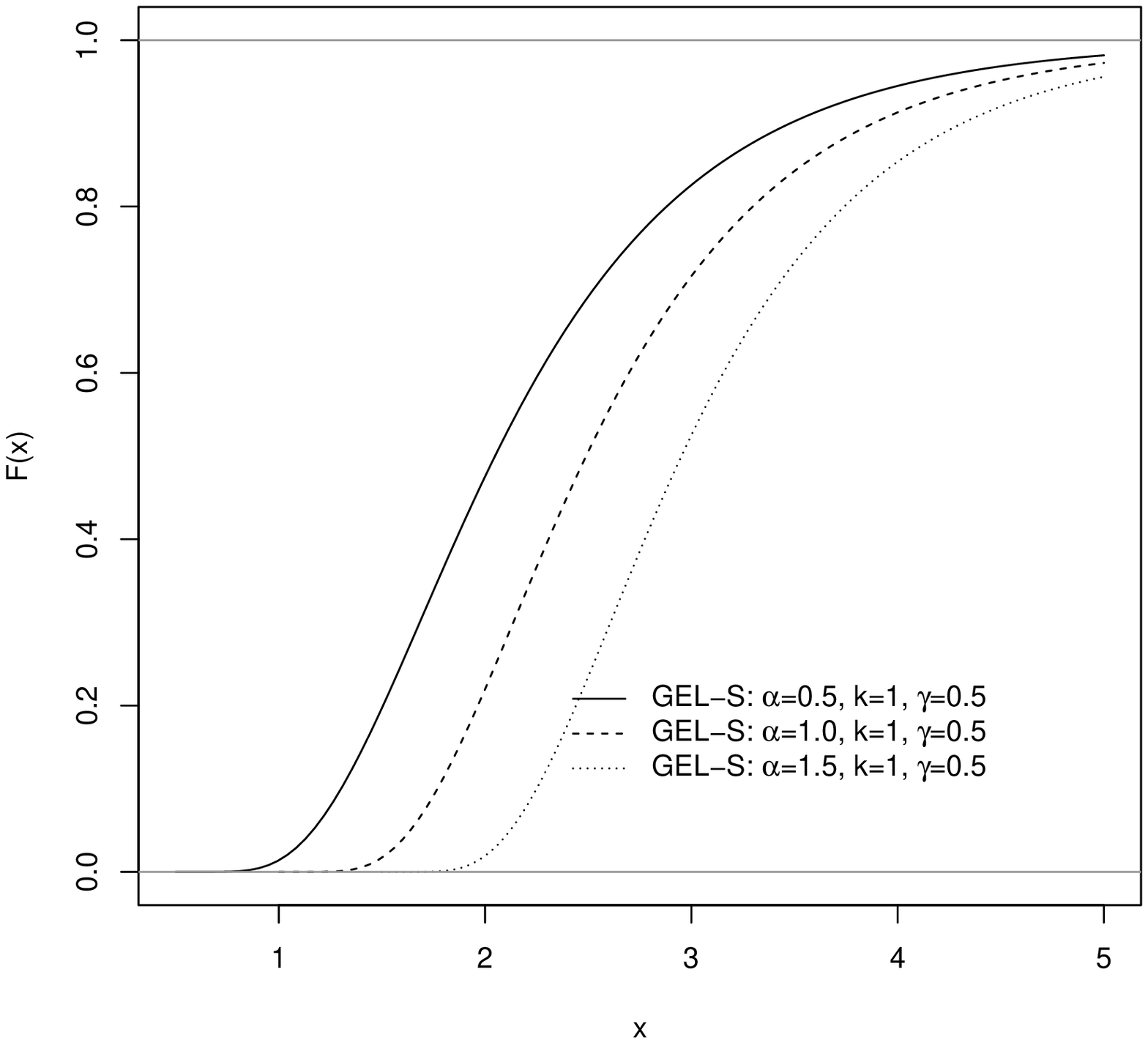} 
\centering \includegraphics[scale=0.30]{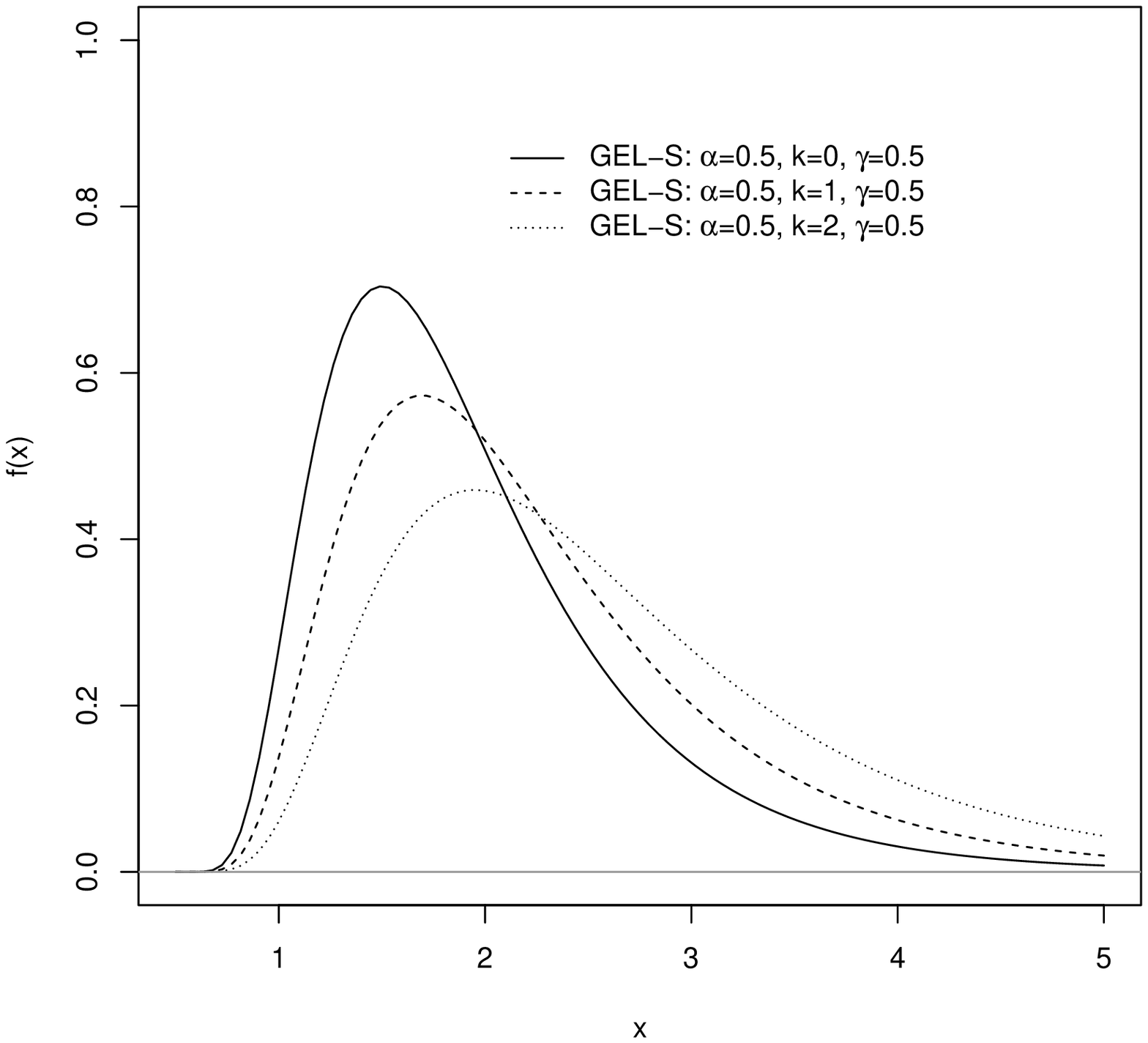} 
\centering \includegraphics[scale=0.30]{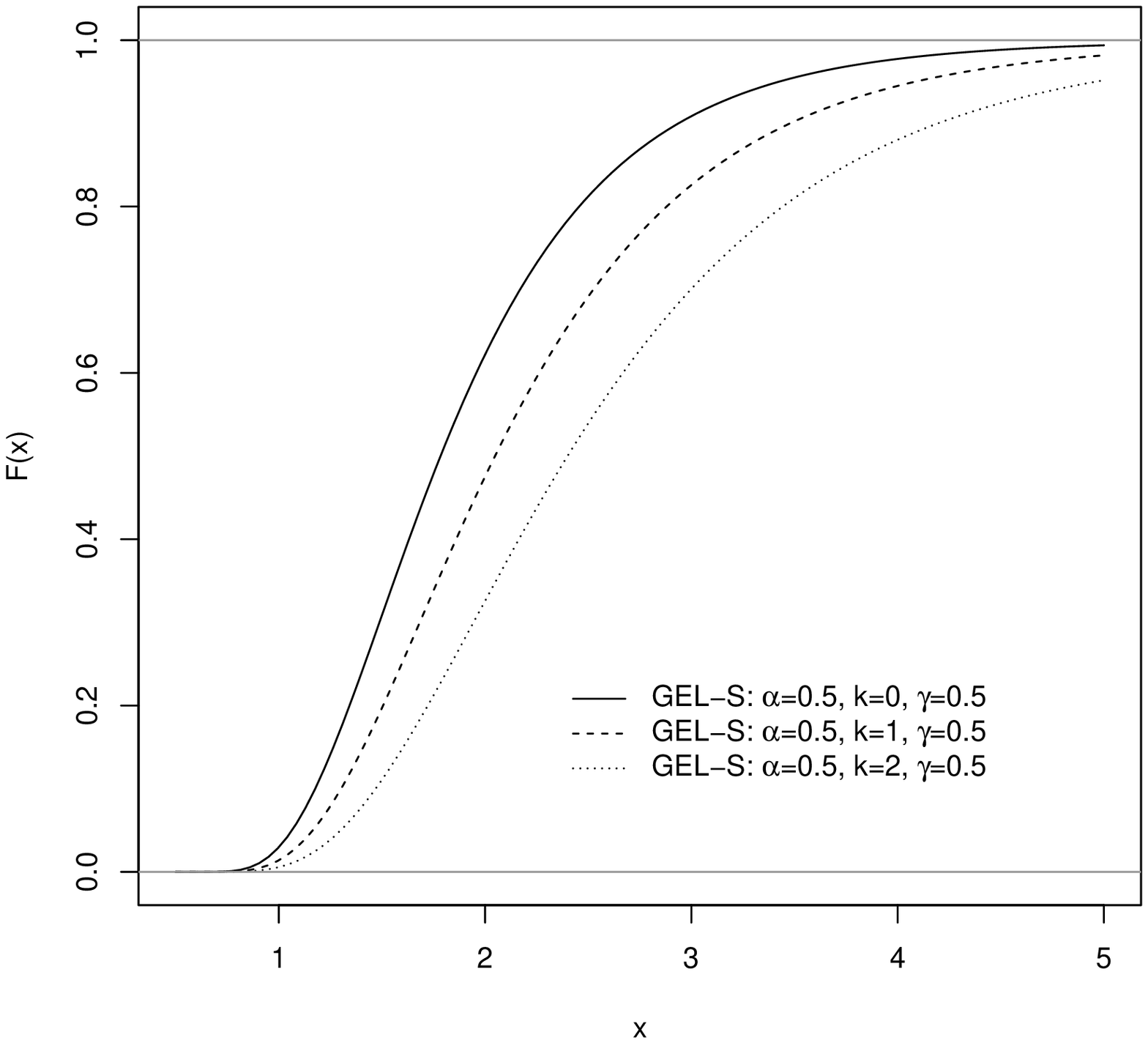} 
\centering \includegraphics[scale=0.30]{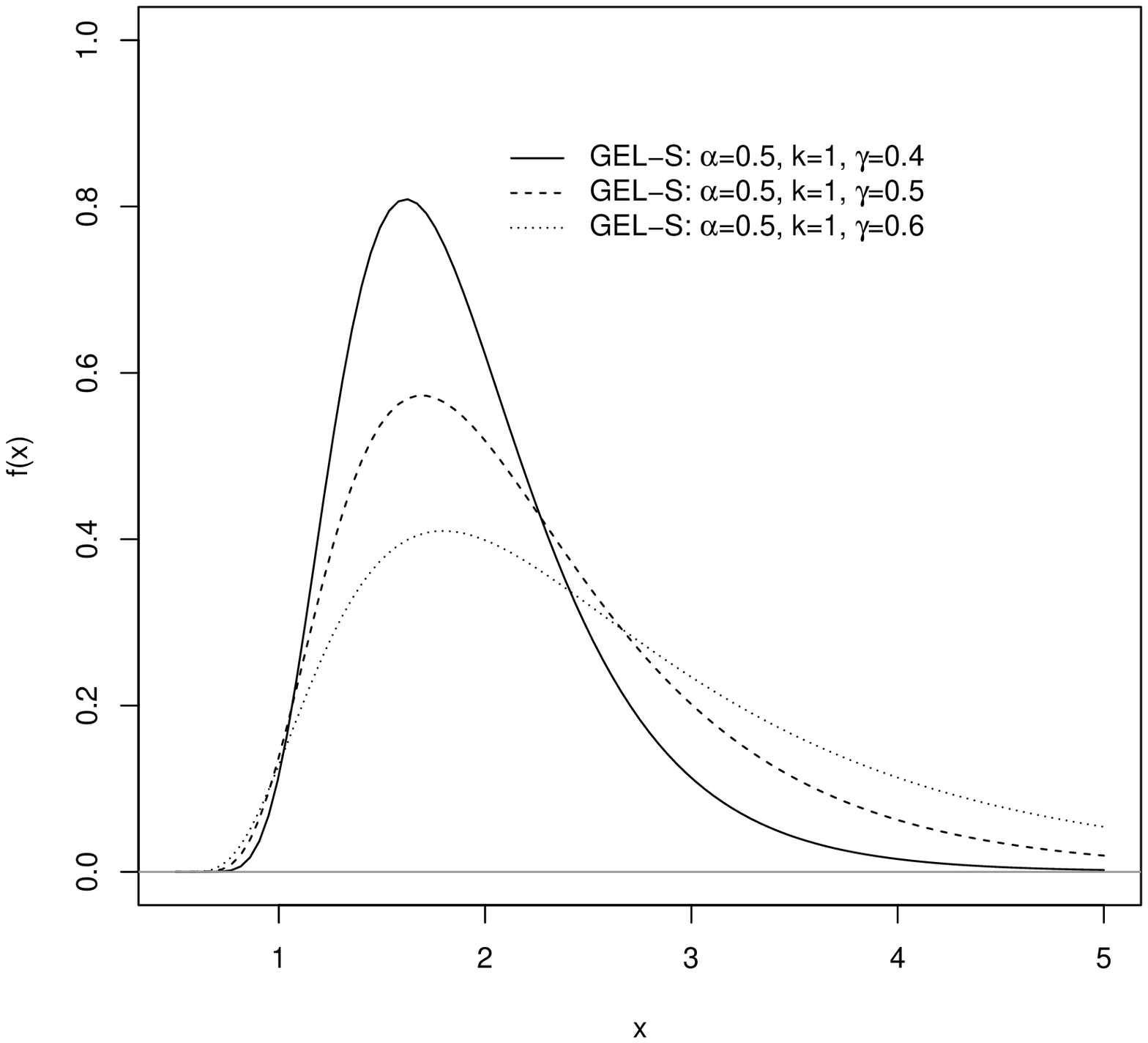} 
\centering \includegraphics[scale=0.30]{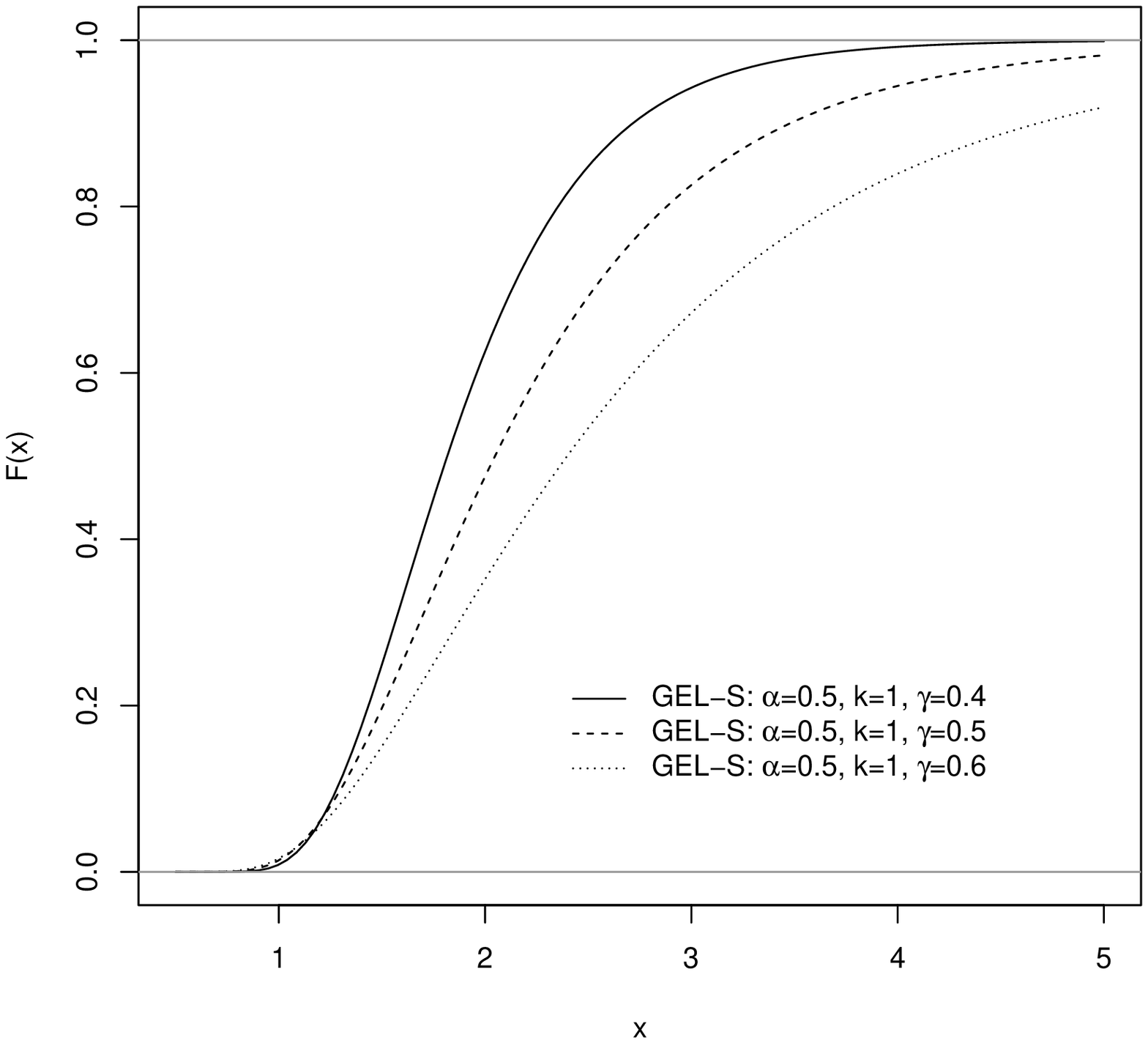} 
\caption{Comparisons of pdfs (left plots) and cdfs (right plots) of GEL-S distributions by varying parameters ($\alpha$ in top plots, $k$ in middle plots, $\gamma$ in bottom plots)}
\label{fig02} 
\end{figure}

\section{Statistical Properties of the GEL-S Distribution}
\label{sec2}

In this section, statistical properties of the GEL-S distribution are studied.
To this aim, hereafter, $X$ denotes a rv following a GEL-S distribution with the parameters $\alpha$, $k$, and $\gamma$, and with the pdf $f$ defined in (\ref{deff}).

\subsection{Mean, Variance, Skewness, Kurtosis, and Moments}

Firstly, the $n$th moment of $X$, $n=0,1,2,\ldots$ is described, computations are presented in Annexe \ref{Proofs}:
\begin{equation}\label{momn}
E\big[X^n\big]:=\int_\alpha^\infty x^n\,f(x)\,dx=C\gamma\sqrt{2\pi}\,\sum_{i=0}^{n+k}{{n+k}\choose{i}}\alpha^{n+k-i}e^{(i+1)^2\gamma^2/2},
\end{equation}
which means that $X$ has all its moments.
From this expression, important statistics of $X$ can be deduced, for instance the mean
$$
\mu_X:=E\big[X\big]=C\gamma\sqrt{2\pi}\,\sum_{i=0}^{1+k}{{1+k}\choose{i}}\alpha^{1+k-i}e^{(i+1)^2\gamma^2/2},
$$
the variance
$$
\sigma^2_X:=E\big[X^2\big]-\big(E\big[X\big]\big)^2
 =C\gamma\sqrt{2\pi}\,\sum_{i=0}^{2+k}{{2+k}\choose{i}}\alpha^{2+k-i}e^{(i+1)^2\gamma^2/2}-\mu_X^2,
$$
the skewness
$$
\textrm{Skew}_X:=E\left[\left(\frac{X-\mu_X}{\sigma_X}\right)^3\right]=\frac{C\gamma\sqrt{2\pi}\,\sum_{i=0}^{3+k}{{3+k}\choose{i}}\alpha^{3+k-i}e^{(i+1)^2\gamma^2/2}-3\mu_X\sigma^2_X-\mu^3_X}{\sigma^3_X}
,
$$
and the kurtosis
$$
\textrm{Kurt}_X:=E\left[\left(\frac{X-\mu_X}{\sigma_X}\right)^4\right]
=\frac{C\gamma\sqrt{2\pi}\,\sum_{i=0}^{4+k}{{4+k}\choose{i}}\alpha^{4+k-i}e^{(i+1)^2\gamma^2/2}-4\mu_X\sigma^3_X\textrm{Skew}_X-6\mu^2_X\sigma^2_X-\mu^4_X}{\sigma^4_X}.
$$
Tab. \ref{Tabxyz} illustrates the previous statistics by considering the GEL-S distributions shown in Fig. \ref{fig02}.
These results show that the increase of the mean, the skewness and the kurtosis are promoted when any of the parameters $\alpha$, $k$ or $\gamma$ increases, but for the variance only the increase of $k$ or $\gamma$ promote its increase.

\begin{table}[!ht]
\centering
\begin{tabular}{ccccr}
\hline
Parameters & $\mu_X$ & $\sigma_X^2$ & $\textrm{Skew}_X$ & \multicolumn{1}{c}{$\textrm{Kurt}_X$} \\
\hline
$\alpha=0.5$, $k=1$, $\gamma=0.5$ & 2.26 & 0.92 & 1.78 & 9.08 \\
$\alpha=1.0$, $k=1$, $\gamma=0.5$ & 2.70 & 0.87 & 1.80 & 9.23 \\
$\alpha=1.5$, $k=1$, $\gamma=0.5$ & 3.16 & 0.84 & 1.81 & 9.33 \\
$\alpha=0.5$, $k=0$, $\gamma=0.5$ & 1.95 & 0.60 & 1.75 & 8.90 \\
$\alpha=0.5$, $k=2$, $\gamma=0.5$ & 2.67 & 1.46 & 1.80 & 9.21 \\
$\alpha=0.5$, $k=1$, $\gamma=0.4$ & 1.93 & 0.37 & 1.34 & 6.33 \\
$\alpha=0.5$, $k=1$, $\gamma=0.6$ & 2.79 & 2.41 & 2.31 & 13.68 \\
\hline
\end{tabular}
\caption{Statistics for the GEL-S distributions shown in Fig. \ref{fig02}}
\label{Tabxyz}
\end{table}

\subsection{Mode}

The explicit expression of $f$ given by (\ref{deff}) allows the analysis of the mode $x_m$ of the GEL-S distribution.
This is given in the following result.

\begin{proposition}\label{PropMode}
The mode of the GEL-S distribution with parameters $\alpha$, $k$ and $\gamma$ exists, is unique and is the solution of the equation
$$
x\,\log(x-\alpha)=k\gamma^2(x-\alpha).
$$
\end{proposition}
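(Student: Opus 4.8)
The plan is to identify the stationary points of $f$ from its logarithmic derivative, to show that one of them is a global maximiser, and then to settle uniqueness. First I would work with $\log f$ rather than with $f$: from (\ref{deff}),
$$
\log f(x)=\log C+k\log x-\frac{\big(\log(x-\alpha)\big)^2}{2\gamma^2},\qquad x>\alpha,
$$
so that
$$
\frac{f'(x)}{f(x)}=\frac{k}{x}-\frac{\log(x-\alpha)}{\gamma^2(x-\alpha)} .
$$
Since $\gamma^2 x(x-\alpha)>0$ for $x>\alpha$, multiplying through shows that a point $x$ is stationary for $f$ exactly when $k\gamma^2(x-\alpha)=x\log(x-\alpha)$, which is the equation in the statement. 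It then remains to prove that $f$ attains a maximum on $(\alpha,\infty)$ and that such a stationary point is unique.

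Existence is quick: $f$ is continuous and strictly positive on $(\alpha,\infty)$, and the limits $\lim_{x\to\alpha^+}f(x)=0$ and $\lim_{x\to\infty}f(x)=0$ established above force $f$ to attain its supremum at an interior point, which is necessarily stationary; hence the displayed equation has a solution, and it is a mode.

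The step I expect to be the main obstacle is uniqueness, because the transcendental equation $x\log(x-\alpha)=k\gamma^2(x-\alpha)$ is not manifestly single-valued. The route I would take is the substitution $y=\log(x-\alpha)$ (equivalently $x=\alpha+e^y$), under which maximising $f$ over $x>\alpha$ becomes maximising
$$
\ell(y):=\log C+k\log(\alpha+e^y)-\frac{y^2}{2\gamma^2},\qquad y\in\Rset .
$$
A short computation gives $\ell''(y)=\dfrac{k\alpha e^y}{(\alpha+e^y)^2}-\dfrac{1}{\gamma^2}$, and since $\dfrac{e^y}{(\alpha+e^y)^2}\le\dfrac{1}{4\alpha}$ (with equality at $e^y=\alpha$, and the case $\alpha=0$ being trivially quadratic in $y$), one obtains $\ell''(y)\le\dfrac{k}{4}-\dfrac{1}{\gamma^2}$. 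Hence $\ell$ is strictly concave -- so $\ell'$ vanishes at most once -- whenever $k\gamma^2\le4$, which settles uniqueness in that regime. For general $(\alpha,k,\gamma)$ the argument is more delicate: one would have to show that every stationary point of $\ell$ is a strict local maximum, i.e.\ that $\ell''<0$ there, by inserting the stationarity relation $k\gamma^2 e^y=y(\alpha+e^y)$ into the formula for $\ell''$ and bounding the result, or else argue directly that the equation cannot have three roots. Carrying out that reduction, and deciding whether it holds for all admissible parameters, is the calculation I anticipate being the crux of the proof.
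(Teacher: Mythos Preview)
Your derivation of the stationary equation via the logarithmic derivative, and your existence argument from the boundary limits $f(\alpha^+)=f(\infty)=0$, are both correct and in fact a little cleaner than what the paper writes. The genuine gap is exactly where you flag it: uniqueness when $k\gamma^2>4$ and $\alpha>0$. Your bound $\ell''(y)\le k/4-1/\gamma^2$ is sharp (equality at $e^y=\alpha$), so for $k\gamma^2>4$ the function $\ell$ can genuinely fail to be concave, and the concavity route will not close on its own without an additional idea. The suggestion of ``inserting the stationarity relation into $\ell''$'' is not yet a proof.

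The paper does not attempt concavity of $\log f$ in any variable. It stays with the equation $x\log(x-\alpha)=k\gamma^2(x-\alpha)$ in the $x$-variable and compares the two sides directly, splitting into $\alpha=0$ and $\alpha>0$. For $\alpha=0$ it notes that $x\log x$ is convex and the right-hand side is a line through the origin, so there are at most two intersections; one is $x=0$ (not a mode since $f(0^+)=0$), leaving a single positive root. For $\alpha>0$ it argues that the left-hand side has derivative $\log(x-\alpha)+x/(x-\alpha)$ and is therefore strictly increasing from $-\infty$ to $+\infty$ on $(\alpha,\infty)$, while the right-hand side is a line starting at $0$, forcing exactly one crossing. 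This line of argument is insensitive to the size of $k\gamma^2$, which is why it sidesteps the obstacle you encountered. If you follow this route, do verify the positivity of $\log(x-\alpha)+x/(x-\alpha)$ with some care for small $\alpha$: the paper simply asserts it, and the minimum of $\log u+1+\alpha/u$ over $u>0$ occurs at $u=\alpha$ with value $\log\alpha+2$.
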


The claim on unicity given in the previous proposition shows that the GEL-S distribution is always unimodal.
Furthermore, from the relationship given by this proposition we have that, if $k=0$, $x_m=1+\alpha$, without influence of $\gamma$, whereas if $k>0$, from
$$
x\,(x-\alpha)\,\log(x-\alpha)=k\gamma^2(x-\alpha)^2>0,
$$
$x_m>1+\alpha$ follows.

Illustrations of modes are presented in Tab. \ref{exMode} considering the GEL-S distributions shown in Fig. \ref{fig02}.
Their corresponding means are included.
These results corroborate the relations between the mode and $\alpha$ deduced above.
Also, it is found that the mode is always lower than its corresponding mean, which is in line with the right skewness of the GEL-S distribution.

\begin{table}[!ht]
\centering
\begin{tabular}{ccc}
\hline
Parameters & $\mu_X$ 
& $x_m$ \\
\hline
$\alpha=0.5$, $k=1$, $\gamma=0.5$ & 2.26 
& 1.69 \\
$\alpha=1.0$, $k=1$, $\gamma=0.5$ & 2.70 
& 2.14 \\
$\alpha=1.5$, $k=1$, $\gamma=0.5$ & 3.16 
& 2.61 \\
$\alpha=0.5$, $k=0$, $\gamma=0.5$ & 1.95 
& 1.50 \\
$\alpha=0.5$, $k=2$, $\gamma=0.5$ & 2.67 
& 1.95 \\
$\alpha=0.5$, $k=1$, $\gamma=0.4$ & 1.93 
& 1.62 \\
$\alpha=0.5$, $k=1$, $\gamma=0.6$ & 2.79 
& 1.80 \\
\hline
\end{tabular}
\caption{Means and modes for the GEL-S distributions shown in Fig. \ref{fig02}}
\label{exMode}
\end{table}

\subsection{Quantiles and Random Number Generation}
\label{Quantilesandrandomnumbergeneration}

The quantile function $q(p)$, $0<p<1$, is obtained by solving
$$
F\big(q(p)\big)=p,
$$
therefore, thus, for the GEL-S distribution, this function $q$ corresponds to the solution of the nonlinear equation
\begin{equation}\label{quantile}
\gamma C\sqrt{2\pi}\,\sum_{i=0}^k{{k}\choose{i}}\alpha^{k-i}e^{(i+1)^2\gamma^2/2}\Phi\left(\frac{1}{\gamma}\left(\log(q(p)-\alpha)-(i+1)\gamma^2\right)\right)=p.
\end{equation}
Since
$$
F'(x)=C\,\frac{1}{x-\alpha} \sum_{i=0}^k{{k}\choose{i}}\alpha^{k-i}e^{(i+1)^2\gamma^2/2}e^{-\frac{1}{2}\left(\frac{1}{\gamma}\left(\log(x-\alpha)-(i+1)\gamma^2\right)\right)^2}>0,\quad x>\alpha,
$$
we have that the solution of (\ref{quantile}) is unique.

Illustrations of quantiles are presented in Tab. \ref{exQuantile}.
To compute quantiles, i.e. to solve (\ref{quantile}), the function \texttt{uniroot} in the R software package was used.
This table shows the quantile when $p=0.5$, i.e. the median of $X$, $x_M$, for the distributions presented in Fig. \ref{fig02}.
Means taken from Tab. \ref{Tabxyz} are included in Tab. \ref{exQuantile} in order to compare all these statistics.
The quantiles $q(0.01)$, $q(0.05)$, $q(0.95)$ and $q(0.99)$ are also incorporated to Tab. \ref{exQuantile}, which may be used as risk measures in contexts like insurance or finance 
\cite{AlexanderSarabia2012,BellesSamperaGuillenSantolino2016}.
These results show that in all cases the medians are lower than the means, this means that the bulk of data is concentrated to the left of the mean which is in line with the right skewness of this type of distributions.
Also, as expected, $q(p)$ is increasing in $p$ and $q(0.01)$ is near to $\alpha$, whereas due to the right skewness of the GEL-S distribution, the differences between $q(0.05)$ and $q(0.01)$ are lower than the ones between $q(0.99)$ and $q(0.95)$.

\begin{table}[!ht]
\centering
\begin{tabular}{cccccccc}
\hline
Parameters & $\mu_X$ & $q(0.5)$ or $x_M$ & $q(0.01)$ & $q(0.05)$ & $q(0.95)$ & $q(0.99)$ \\
\hline
$\alpha=0.5$, $k=1$, $\gamma=0.5$ & 2.26 & 2.05 & 0.97 & 1.17 & 4.08 & 5.56 \\
$\alpha=1.0$, $k=1$, $\gamma=0.5$ & 2.70 & 2.49 & 1.45 & 1.64 & 4.47 & 5.92 \\
$\alpha=1.5$, $k=1$, $\gamma=0.5$ & 3.16 & 2.95 & 1.94 & 2.12 & 4.89 & 6.31 \\
$\alpha=0.5$, $k=0$, $\gamma=0.5$ & 1.95 & 1.78 & 0.90 & 1.06 & 3.42 & 4.61 \\
$\alpha=0.5$, $k=2$, $\gamma=0.5$ & 2.67 & 2.40 & 1.06 & 1.30 & 4.96 & 6.83 \\
$\alpha=0.5$, $k=1$, $\gamma=0.4$ & 1.93 & 1.87 & 1.01 & 1.17 & 3.07 & 3.88 \\
$\alpha=0.5$, $k=1$, $\gamma=0.6$ & 2.79 & 2.40 & 0.95 & 1.18 & 5.72 & 8.42 \\
\hline
\end{tabular}
\caption{Means and quantiles for the GEL-S distributions shown in Fig. \ref{fig02}}
\label{exQuantile}
\end{table}

The solution $q$ of (\ref{quantile}) given $p$, $0<p<1$, could be used to generate random numbers of a rv that follows a GEL-S distribution.
Indeed, since $F'>0$, the (non-explicit) function $F^{-1}(p)$ is strictly increasing and then, the inverse transform sampling method can be applied to draw random samples.
This method consists in \cite{Devroye}
\begin{enumerate}
\item
Generate a random number $p$ from the standard uniform distribution in the interval $[0,1]$; and,

\item
Compute $q$ such that $F(q)=p$, i.e. (\ref{quantile}).

\end{enumerate}

The implementation of the previous method may be done by generating random numbers following a uniform distribution that may be performed using the function \texttt{runif} in the R software package, and thereafter, by computing quantiles that may be performed using the function \texttt{uniroot} mentioned above.

This random number generation procedure will be used later on in order to simulate random numbers following a GEL-S distribution.
These numbers will be used to study the performance of the new distribution.

\section{Maximum Likelihood Estimation}
\label{sec3}

In this section, 
the method of maximum likelihood for estimating $\alpha$, $k$ and $\gamma$ of the GEL-S distribution is proposed.

Let $X$ be a rv following a GEL-S distribution with parameters $\alpha$, $k$ and $\gamma$, and let $x_1$, \ldots, $x_n$ be a sample of $X$ obtained independently.
Let $\theta=(\alpha,k,\gamma)$.

Following the method of maximum likelihood, the likelihood function of this random sample is then given by
$$
L(\theta|x_1,\ldots,x_n)=\prod_{i=1}^n
C\,x_i^k e^{-(2\gamma^2)^{-1}\left(\log(x_i-\alpha)\right)^2},
$$
and then its log-likelihood function is
$$
l(\theta|x_1,\ldots,x_n)=n\,\log C+k\sum_{i=1}^n\log x_i-\frac{1}{2\gamma^2}\sum_{i=1}^n\left(\log(x_i-\alpha)\right)^2.
$$
Maximum likelihood estimates (MLEs) of $\alpha$, $k$ and $\gamma$ might be reached by solving the non-linear system obtained by equaling to 0 the derivatives of $l$ with respect to $\theta$.
Unfortunately, the parameter $k$ is not continuous and thus such procedure cannot be applied.

In order to overcome this issue, the following \emph{ad-hoc} alternative to reach the maximum of $l$ is proposed.
Fixing $k=0,1,2,\ldots$, $l$ is maximized by searching optimal estimates $\hat{\alpha}(k)$ and $\hat{\gamma}(k)$.
Then, $k$, $\hat{\alpha}(k)$ and $\hat{\gamma}(k)$ are selected as the ones that maximize $l$ through the range of values $k$ taken into account.
This procedure is equivalent to maximize $l$ throughout the three parameters, but only considering a few values for $k$.
Then, following this proposed procedure we need to solve the non-linear system, fixed $k$,
\begin{eqnarray*}
\frac{\partial l}{\partial\alpha} & = & n\frac{1}{C}\frac{\partial C}{\partial\alpha}+\frac{1}{\gamma^2}\sum_{i=1}^n\frac{\log(x_i-\alpha)}{x_i-\alpha}\quad=\quad0 \\
\frac{\partial l}{\partial\gamma} & = & n\frac{1}{C}\frac{\partial C}{\partial\gamma}+\frac{1}{\gamma^3}\sum_{i=1}^n\left(\log(x_i-\alpha)\right)^2\quad=\quad0.
\end{eqnarray*}
There are no explicit solutions for this system.
A method to numerically solve such a system is the Newton-Raphson (NR) algorithm.
This is a well-known and useful technique for finding roots of systems of non-linear equations in several variables. 
The function \texttt{nlm} in the R software package, that carries out a minimization of an objective function using a NR-type algorithm, is used to solve the system described above.
To this aim, this function is applied to the objective function $-l(\theta|x_1,\ldots,x_n)$ given $k$, which provides MLEs $\hat{\theta}$ of $\theta$.

A limitation of the function \texttt{nlm} is that constraints are not allowed.
This is an issue for estimating both parameters $\alpha$ and $\gamma$ of a GEL-S distribution since $\alpha$ needs to be non-negative and $\gamma$ positive, i.e. negative values as estimates for $\alpha$ and $\gamma$ are not allowed.
In practice, applications of \texttt{nlm} to get estimates for $\alpha$ and $\gamma$ showed that only the estimates of $\alpha$ could eventually be negative.
In order to circumvent this limitation, the following simple modification of $\alpha$ in the objective function to be minimized could be used: consider $\alpha^2$ instead of $\alpha$.
This means that $\alpha$ could be estimated by negative values, but then the true value for $\alpha$ is positive since it is equal to $\alpha^2$.

For interval estimation of $(\alpha,\gamma)$ and hypothesis tests on these parameters, we use the $2\times2$ observed information matrix given by, fixed $k$,
$$
I(\theta)
=-E\left[
\begin{array}{cc}
\displaystyle \frac{\partial^2 l}{\partial\alpha^2} & \displaystyle \frac{\partial^2 l}{\partial\alpha\partial\gamma} \\
 \vspace{-2mm} & \\
\displaystyle \frac{\partial^2 l}{\partial\alpha\partial\gamma} & \displaystyle \frac{\partial^2 l}{\partial\gamma^2}
\end{array}
\right]
$$
where
\begin{eqnarray*}
\frac{\partial^2 l}{\partial\alpha^2} & = & n\frac{1}{C}\frac{\partial^2 C}{\partial\alpha^2}-n\frac{1}{C^2}\left(\frac{\partial C}{\partial\alpha}\right)^2+\frac{1}{\gamma^2}\sum_{i=1}^n\left(\frac{\log(x_i-\alpha)}{(x_i-\alpha)^2}-\frac{1}{(x_i-\alpha)^2}\right) \\
\frac{\partial^2 l}{\partial\alpha\partial\gamma} & = & n\frac{1}{C}\frac{\partial^2 C}{\partial\alpha\partial\gamma}-n\frac{1}{C^2}\frac{\partial C}{\partial\alpha}\frac{\partial C}{\partial\gamma}-\frac{2}{\gamma^3}\sum_{i=1}^n\frac{\log(x_i-\alpha)}{x_i-\alpha} \\
\frac{\partial^2 l}{\partial\gamma^2} & = & n\frac{1}{C}\frac{\partial^2 C}{\partial\gamma^2}-n\frac{1}{C^2}\left(\frac{\partial C}{\partial\gamma}\right)^2-\frac{3}{\gamma^4}\sum_{i=1}^n\left(\log(x_i-\alpha)\right)^2.
\end{eqnarray*}
Under certain regularity conditions \cite{Cramer}, the MLE $\hat{\theta}$ given $k$ approximates as $n$ increases a multivariate normal distribution with
mean equal to the true parameter value $\theta$ and variance-covariance matrix given
by the inverse of the observed information matrix, i.e. $\Sigma=\big[\sigma_{ij}\big]=I^{-1}(\theta)$.
Hence, the asymptotic behavior of two-sided $(1-\epsilon)100~\%$ confidence intervals (CIs) for the
parameters $\alpha$ and $\gamma$ are approximately
$$
\hat{\alpha}\pm z_{\epsilon/2}\sqrt{\hat{\sigma}_{11}},\quad
\hat{\gamma}\pm z_{\epsilon/2}\sqrt{\hat{\sigma}_{22}}
$$
where $z_\delta$ represents the $\delta\,100~\%$ percentile of the standard normal distribution.

\section{Simulation Studies}
\label{sec4}

In this section, Monte Carlo simulation studies are carried out to assess the performance of the MLEs of $\alpha$ and $\gamma$ described in the previous section.
Two sets of parameters are considered, each one corresponding to one study.
The true parameters for these studies are presented in Tab. \ref{trueValues}.

\begin{table}[!ht]
\centering
\begin{tabular}{cccc}
\hline
Study & $\alpha$ & $k$ & $\gamma$ \\
\hline
I & 1.0 & 2 & 1.0 \\
II & 2.0 & 4 & 0.5 \\
\hline
\end{tabular}
\caption{Parameters for simulation studies}
\label{trueValues}
\end{table}

Each study takes into account the following scenarios by varying the sample size $n$: 1\,000 and 10\,000.
Then, following the procedure to generate random numbers indicated in Subection \ref{Quantilesandrandomnumbergeneration}, random numbers are simulated from a GEL-S distribution with given parameters $\alpha$, $k$ and $\gamma$.
A fixed seed is used to generate such random numbers, impliying that all results of these studies can always be exactly replicated.

Fig. \ref{figHist} exhibits histograms of the empirical pdfs of the samples analyzed.
These plots are built using 100 bins in order to have enough detail on the shape of these empirical curves.
The plots on top correspond to the study I and the ones on bottom to the study II.
From these plots, a greater right skewness for data of the study I than the one for data of the study II is observed, independently of variations of $n$.

\begin{figure}[!ht]
\centering \includegraphics[scale=0.30]{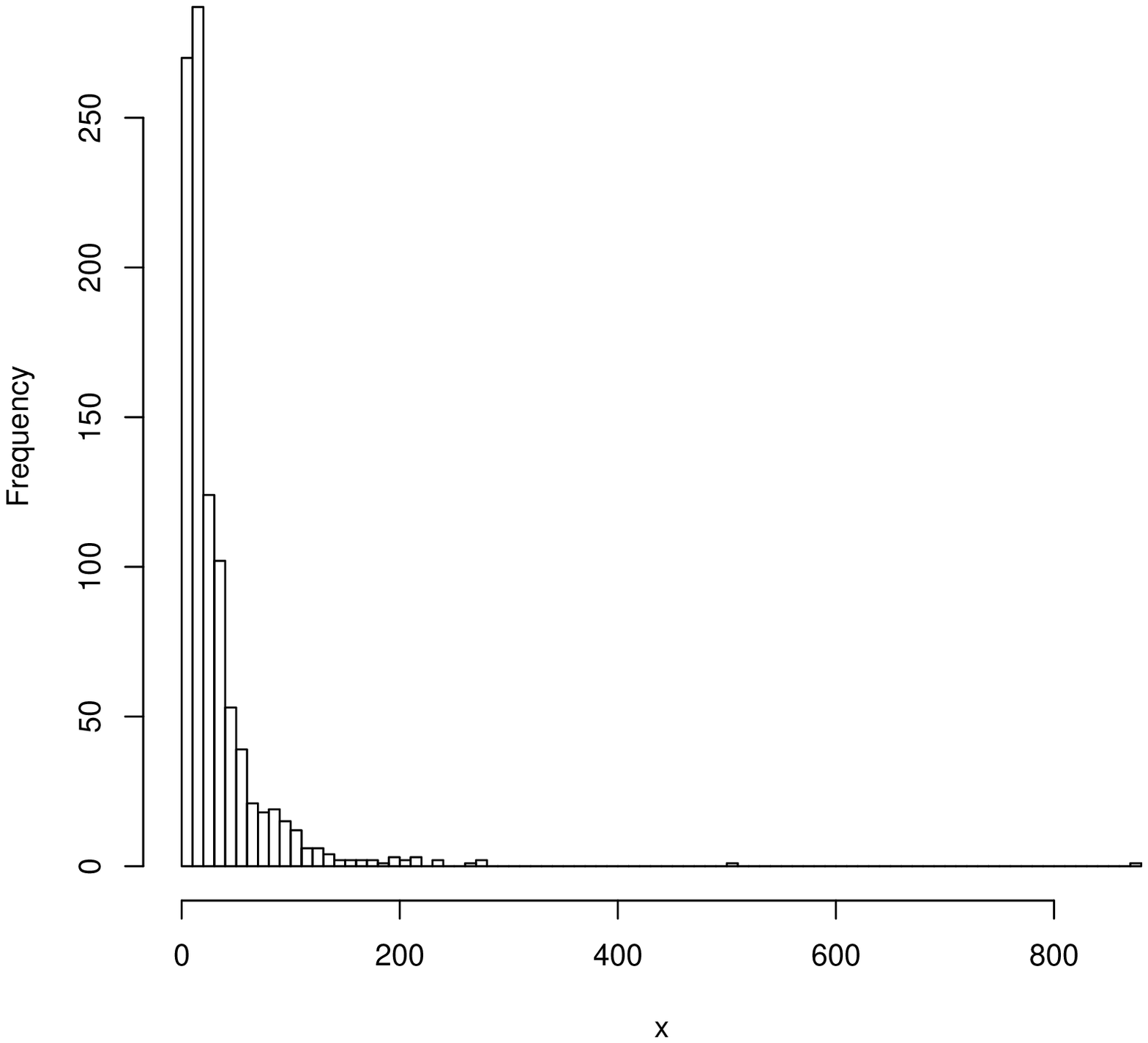} 
\centering \includegraphics[scale=0.30]{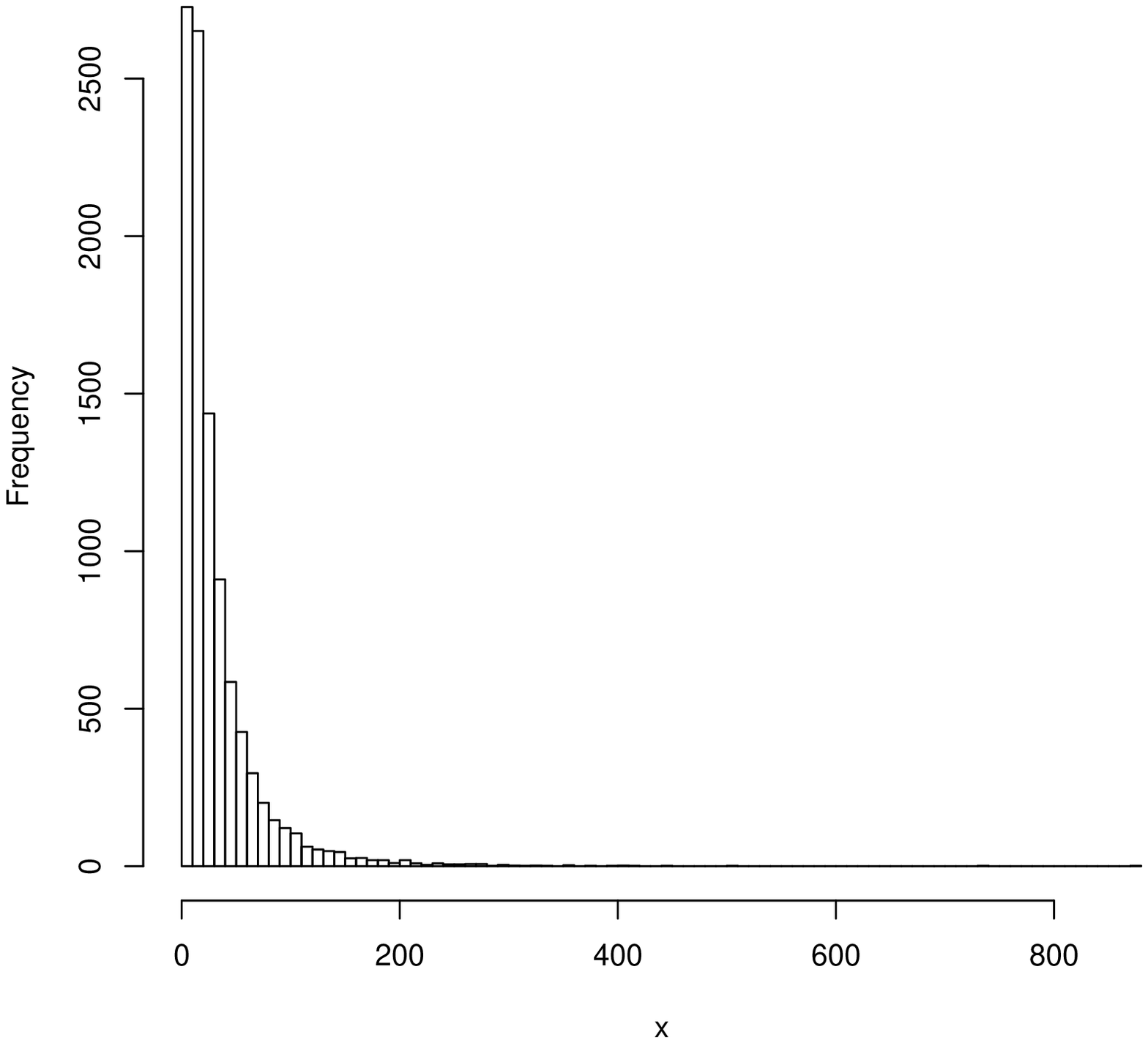} 
\centering \includegraphics[scale=0.30]{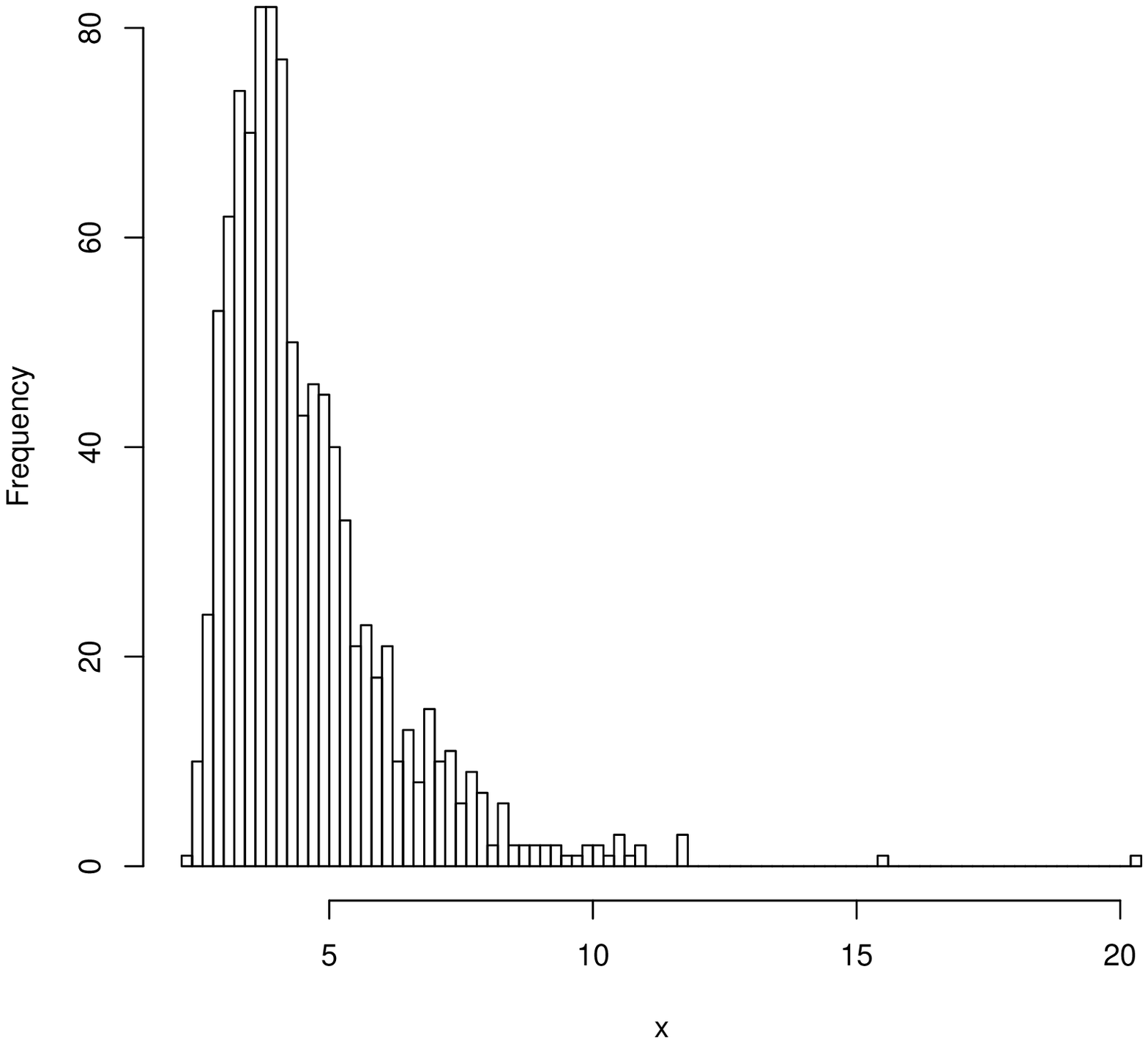} 
\centering \includegraphics[scale=0.30]{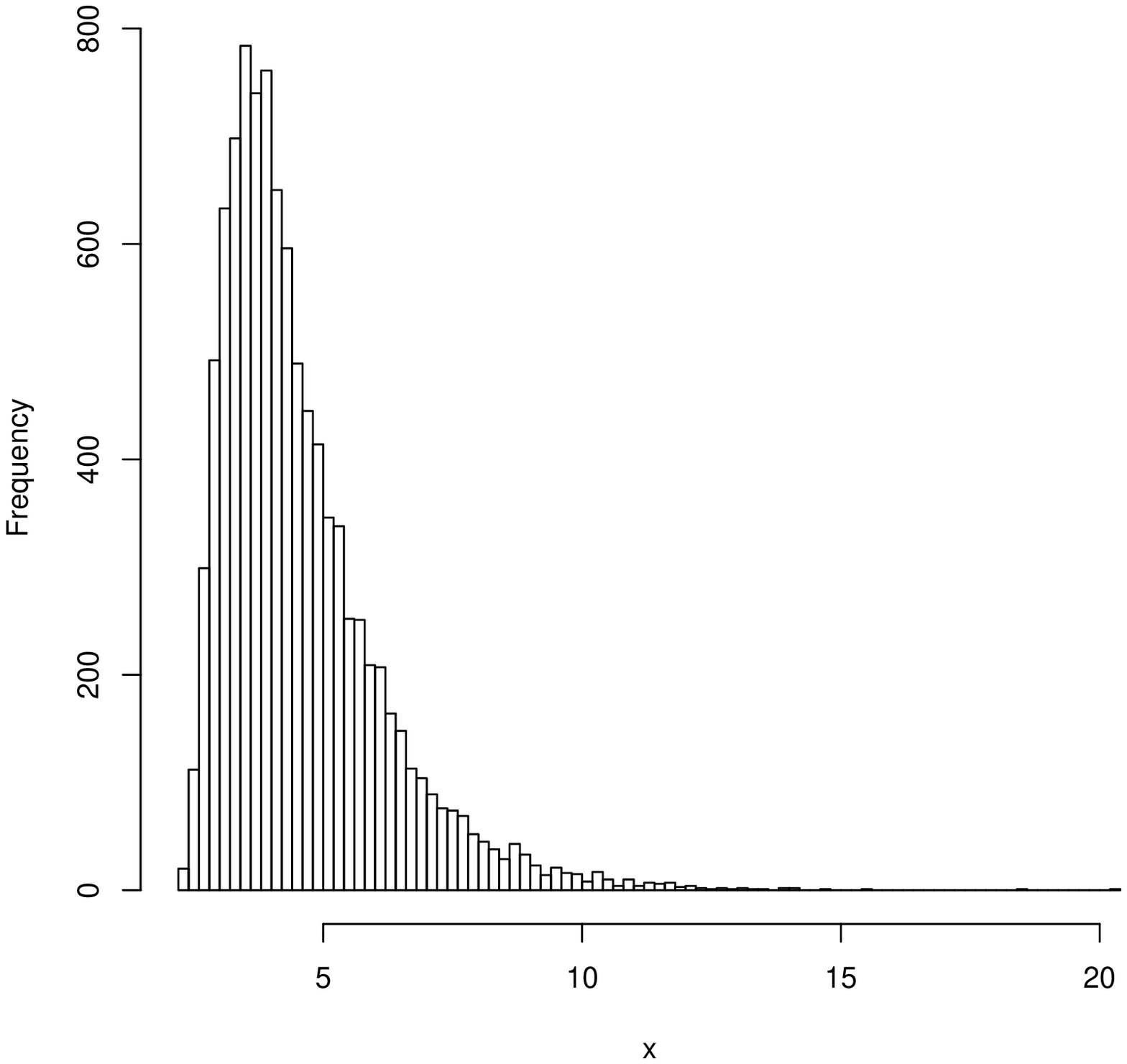} 
\caption{Histograms by varying the parameters of the GEL-S distribution (study I on top and study II on bottom) and by varying $n$ ($n=1\,000$ to the left and $n=10\,000$ to the right)}
\label{figHist} 
\end{figure}

Next, estimates of $\alpha$ and $\gamma$ are computed given $k$, using the procedure proposed in Section \ref{sec3} for estimating $\alpha$ and $\gamma$ given $k$.
Considering always ranges of $k$ from 0 to 6, 
Tab. \ref{ParameterestimatesSimulation} shows these results by varying the true parameters and $n$.
For each $k$, the observed maximum likelihood is included. 
Then, by study and $n$, the models with the highest likelihood over the studied range of $k$ are selected.
These selected models are highlighted.
It is found that the values $k$ of the selected models correspond to the true values $k$, except when $n=1\,000$ in study II.
Hence, 
it seems that, under the estimate method proposed, 
for small samples with not so high skewness, other than the true parameter $k$ could be possible.
On the estimates of $\gamma$ given by the selected models, they are the nearest to the true parameters, except when $n=1\,000$ in study II.
Considering $\hat{\alpha}$ of the selected models, they are not always the nearest to the true parameters.

\begin{table}[!ht]
\centering
\begin{tabular}{cccc}
\multicolumn{4}{c}{$n=1\,000$} \\
\hline
Given & \multicolumn{2}{c}{Estimates} & Maximum \\
\cline{2-3}
$k$ & $\hat{\alpha}$ & $\hat{\gamma}$ & likelihood \\
\hline
0 & 1.477 & 1.599 & $-3524$ \\
1 & 1.381 & 1.601 & $-3438$ \\
\textbf{2} & \textbf{1.190} & \textbf{1.004} & $\mathbf{-3416}$ \\
3 & 1.121 & 0.876 & $-3435$ \\
4 & 1.156 & 0.787 & $-3480$ \\
5 & 1.195 & 0.721 & $-3543$ \\
6 & 1.225 & 0.669 & $-3619$ \\
\hline
\end{tabular}
\hspace{5mm}
\begin{tabular}{cccc}
\multicolumn{4}{c}{$n=10\,000$} \\
\hline
Given & \multicolumn{2}{c}{Estimates} & Maximum \\
\cline{2-3}
$k$ & $\hat{\alpha}$ & $\hat{\gamma}$ & likelihood \\
\hline
0 & 1.204 & 1.601 & $-35355$ \\
1 & 1.171 & 1.205 & $-34397$ \\
\textbf{2} & \textbf{1.003} & \textbf{1.001} & $\mathbf{-34182}$ \\
3 & 0.955 & 0.873 & $-34394$ \\
4 & 1.002 & 0.785 & $-34881$ \\
5 & 1.043 & 0.719 & $-35549$ \\
6 & 1.072 & 0.667 & $-36346$ \\
\hline
\end{tabular}
\begin{tabular}{cccc}
\multicolumn{4}{c}{} \\
\multicolumn{4}{c}{$n=1\,000$} \\
\hline
Given & \multicolumn{2}{c}{Estimates} & Maximum \\
\cline{2-3}
$k$ & $\hat{\alpha}$ & $\hat{\gamma}$ & likelihood \\
\hline
0 & 2.309 & 0.732 & $-723$ \\
1 & 2.244 & 0.646 & $-714$ \\
2 & 2.171 & 0.585 & $-710$ \\
\textbf{3} & \textbf{2.097} & \textbf{0.538} & $\mathbf{-709}$ \\
4 & 2.021 & 0.500 & $-710$ \\
5 & 1.945 & 0.469 & $-712$ \\
6 & 1.868 & 0.444 & $-715$ \\
\hline
\end{tabular}
\hspace{5mm}
\begin{tabular}{cccc}
\multicolumn{4}{c}{} \\
\multicolumn{4}{c}{$n=10\,000$} \\
\hline
Given & \multicolumn{2}{c}{Estimates} & Maximum \\
\cline{2-3}
$k$ & $\hat{\alpha}$ & $\hat{\gamma}$ & likelihood \\
\hline
0 & 2.253 & 0.737 & $-7391$ \\
1 & 2.206 & 0.649 & $-7256$ \\
2 & 2.138 & 0.587 & $-7197$ \\
3 & 2.064 & 0.539 & $-7171$ \\
\textbf{4} & \textbf{1.988} & \textbf{0.501} & $\mathbf{-7164}$ \\
5 & 1.912 & 0.470 & $-7171$ \\
6 & 1.834 & 0.444 & $-7186$ \\
\hline
\end{tabular}
\caption{Parameter estimates in studies I (top) and II (bottom) given $k$ (the selected models are highlighted)}
\label{ParameterestimatesSimulation}
\end{table}

For the estimates of ${\alpha}$ and ${\gamma}$ indicated in the selected models in Tab. \ref{ParameterestimatesSimulation},
Tab. \ref{CISimulations} reports their 95~\% CIs computed using standard errors of the MLEs of ${\alpha}$ and ${\gamma}$ computed from the observed Hessian matrix provided by the function \texttt{nlm}.
These results show that the errors of these estimates, as expected, decrease when $n$ increases, and it seems that the errors of $\hat{\gamma}$ are systematically lower than the ones of $\hat{\alpha}$.

\begin{table}[!ht]
\centering
\begin{tabular}{cccc}
\hline
Study & $n$ & ${\alpha}$ & ${\gamma}$ \\
\hline
I & 1\,000 & $1.190\pm0.279$ & $1.004\pm0.011$ \\
 & 10\,000 & $1.003\pm0.099$ & $1.001\pm0.003$ \\
\cline{1-4}
II & 1\,000 & $2.097\pm0.051$ & $0.538\pm0.010$ \\
 & 10\,000 & $1.988\pm0.018$ & $0.501\pm0.003$ \\
\hline
\end{tabular}
\caption{95~\% CIs for $\alpha$ and $\gamma$ in the simulation studies}
\label{CISimulations}
\end{table}

\section{Applications}
\label{sec5}

In this section, we present applications in order to illustrate the
performance and usefulness of the proposed distribution when compared to natural competitors.

Popular right-skewed real data from several domains are used.
In all cases, these data have been analyzed in other researches and, in this paper, are fitted using the GEL-S distribution.
This allows the immediate comparison of our results with respect to the ones of competitors.

GEL-S parameters are always estimated using the procedure of maximum likelihood described in Section \ref{sec3}.

Through all these applications, models are compared using the Akaike information criterion (AIC), this criterion being defined by $-2\,n_p-2\,l$ where $n_p$ is the number of parameters of the model, and the Schwarz information criterion (SIC) also called the Bayesian information criterion, this criterion being defined by $n_p\, \log n -2l$ with $n$ the sample size.
For both criteria, the lower the better.

\subsection{Leukaemia Data}

In the first application, uncensored survival times in weeks of 33 patients who died from acute myelogenous leukaemia reported and analyzed by Feigl and Zelen \cite{FeiglZelen1965} are considered.
Other characteristics associated to that illness of those individuals are also available.
One of these characteristics is the presence or absence of a morphologic characteristic of white blood cells, which was used by Feigl and Zelen to factor individuals in order to develop models for each one of those two groups.
The data are available in e.g. the package MASS in software R.

The MLEs for the parameters of the GEL-S distribution for the studied leukaemia data and the corresponding reached log-likelihood are presented in Tab. \ref{leukaemiaGELS}.
Standard errors in brackets.

\begin{table}[!ht]
\centering
\begin{tabular}{cccc}
\hline
$k$ & $\hat{\alpha}^2$ & $\hat{\gamma}$ & $-l$ \\
\hline
0 (fixed) & $0.7958_{(0.1889)}$ & $1.6504_{(0.0800)}$ & 153.24 \\
\hline
\end{tabular}
\caption{Fit of leukaemia data using the GEL-S distribution}
\label{leukaemiaGELS}
\end{table}

The studied leukaemia data have been analyzed by others authors, namely to build regressions \cite{VenablesRipley}.
Mead et al. \cite{MeadAfifyHamedaniGhosh2017} fitted the beta exponential Fr\'{e}chet (BExFr) distribution to these data.
This distribution based on the Fr\'{e}chet (F) distribution incorporates three extra shape parameters.
These authors also presented fits to those data using the F distribution and two other extensions of this distribution, the beta Fr\'{e}chet (BFr) \cite{NadarajahGupta2004} and exponentiated Fr\'{e}chet (EFr) \cite{NadarajahKotz2003} distributions, and also fitted other models: the generalized
inverse gamma (GIG) \cite{Mead2015}, McDonald Lomax (McL) \cite{LemonteCordeiro2013}, Zografos-Balakrishnan log-logistic (ZBLL) \cite{ZografosBalakrishnan2009} and gamma
Lomax (GL) \cite{CordeiroOrtegaPopovic2015}
distributions.
These authors presented the log-likelihoods associated to all those models, which we used it to compute their corresponding AIC and SIC values that are presented in Tab. \ref{leukaemiaTab}.

Tab. \ref{leukaemiaTab} also presents both the AIC and SIC values associated to the GEL-S model when the parameters indicated in Tab. \ref{leukaemiaGELS} are considered.
Since models with lower AIC and SIC values are preferable (the highlighted ones), both the AIC and the SIC favor the GEL-S model overall.

\begin{table}[!ht]
\centering
\begin{tabular}{lccc}
\hline
\multicolumn{1}{c}{Model} & $n_p$ $^{(*)}$ & AIC & SIC \\
\hline
BExFr & 5 & 318.11 & 325.59 \\
BFr & 4 & 316.21 & 321.81 \\
EFr & 3 & 316.89 & 321.09 \\
Fr & 2 & 315.99 & 318.79 \\
GEL-S & 2 & $\mathbf{310.49}$ & $\mathbf{313.48}$ \\
GIG & 5 & 318.23 & 325.23 \\
GL & 3 & 317.03 & 321.24 \\
McL & 5 & 320.95 & 327.96 \\
ZBLL & 3 & 324.82 & 329.02 \\
\hline
\multicolumn{4}{l}{$^{(*)}$ $n_p$: number of parameters}
\end{tabular}
\caption{Leukaemia data: AIC and SIC values}
\label{leukaemiaTab}
\end{table}

Fig. \ref{leukaemiaFood} presents a histogram with 25 bins of the leukaemia data used and the overlay of the fitted GEL-S model reached.

\begin{figure}[!ht]
\centering \includegraphics[scale=0.40]{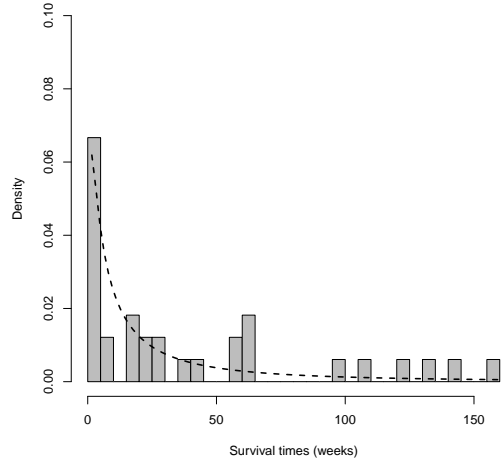} 
\caption{Leukaemia data: Histogram and fitted GEL-S model (dashed line)}
\label{leukaemiaFood} 
\end{figure}

\subsection{Strength Data}

In the second application, strength data reported by Badar and Priest \cite{BaderPriest1982} are considered.
These data consist in strengths measured in giga-Pascals (GPa), for single carbon fibers and for impregnated
1000-carbon fiber tows. These materials were tested under tension at different gauge lengths: 1, 10, 20,
and 50 mm for single fibers and 20, 50, 150 and
300 mm for fiber tows.
Details on the experiment are given in \cite{DurhamPadgett1997}.
These data have been used to mainly perform inference for $P(X<Y)$ where $X$ and $Y$ represent strengths for single fibers at two different gauge lengths \cite{BaderPriest1982,SurlesPadgett1998,SurlesPadgett2001,RaqabKundu2005,KunduGupta2006,KunduRaqab2009}.
Other authors have fitted known distributions to some of these data sets. These models are mentioned below.

In this subsection the data set corresponding to strengths on single fibers tested under tension at the gauge length of 10 mm is examined.
The MLEs for the parameters of the GEL-S distribution for these data and the corresponding reached log-likelihood are presented in Tab. \ref{strengthGELS}.
Standard errors in brackets.

\begin{table}[!ht]
\centering
\begin{tabular}{cccc}
\hline
$k$ & $\hat{\alpha}^2$ & $\hat{\gamma}$ & $-l$ \\
\hline
17 (fixed) & $0.8868_{(0.1034)}$ & $0.2415_{(0.0050)}$ & 56.20 \\
\hline
\end{tabular}
\caption{Fit of strength data using the GEL-S distribution}
\label{strengthGELS}
\end{table}

The studied strength data are very popular.
Asgharzadeh et al. \cite{AsgharzadehEsmaeiliNadarajahShih2013} represented the studied data using the general skew logistic (GSL) distribution introduced by these authors themselves and also using the following known models: the logistic (L), type III generalized logistic (GL3), skew logistic (SL), Azzalini's skew normal \cite{Azzalini1985} (SN), and Azzalini and Capitanio's skew $t$ \cite{AzzaliniCapitanio2003} (ST) distributions.
Kundu and Gupta \cite{KunduGupta2006} fitted the data to the Weibull distribution and provided its log-likelihood.
Mead et al. \cite{MeadAfifyHamedaniGhosh2017} also fitted to the data the distributions mentioned in the previous sub-section.
Ghitany et al. \cite{GhitanyAlJarallahBalakrishnan2013} examined these data using the exponentiated exponential (EE), exponentiated Rayleigh (ER) and exponentiated Pareto (EP) distributions.
More recently, Afify et al. \cite{AfifyNofalEbraheim2015} modeled these data using the Exponentiated
transmuted generalized Rayleigh (ETGR) distribution introduced by them, and the transmuted generalized Rayleigh (TGR) \cite{Merovci2014}, generalized Rayleigh (GR) \cite{SurlesPadgett2001}, and Rayleigh (R) distributions; and Afify et al. \cite{AfifyNofalYousofGebalyButt2015} did it using the transmuted Weibull Lomax (TWL) distribution introduced by them, and the Weibull Lomax (WL) \cite{TahirCordeiroMansoorZubair2015}, modified beta Weibull (MBW) \cite{Khan2015}, transmuted
complementary Weibull geometric (TCWG) \cite{AfifyNofalButt2014} 
and Lomax (Lx) distributions.
All these authors reported log-likelihoods, AIC values and/or SIC values.
The Tab. \ref{strengthGELS} collects both AIC and SIC values.

The AIC and SIC values associated to the GEL-S model based on the parameters indicated in Tab. \ref{strengthGELS} are also included in Tab. \ref{strengthTab}.
Since lower AIC and SIC values are preferred to choose a model, the highlighted ones in Tab. \ref{strengthTab}, according to these criteria the GEL-S model is favorite overall.

\begin{table}[!ht]
\centering
\begin{tabular}{lccc}
\hline
\multicolumn{1}{c}{Model} & $n_p$ $^{(*)}$ & AIC & SIC \\
\hline
BExFr & 5 & 122.70 & 129.71 \\
BFr & 4 & 121.06 & 126.66 \\
EE & 2 & 117.03 & 121.31 \\
EFr & 3 & 118.70 & 122.90 \\
EP & 2 & 120.09 & 122.89 \\
ER & 2 & 117.06 & 121.34 \\
ETGR & 4 & 122.97 & 131.50 \\
Fr & 2 & 121.80 & 124.60 \\
GEL-S & 2 & $\mathbf{116.41}$ & $\mathbf{120.70}$ \\
GIG & 5 & 123.06 & 130.07 \\
GL & 3 & 118.92 & 123.12 \\
GL3 & 3 & 123.59 & 130.02 \\
GR & 2 & 126.62 & 130.91 \\
GSL & 4 & 119.80 & 128.37 \\
\hline
\multicolumn{4}{l}{$^{(*)}$ $n_p$: number of parameters}
\end{tabular}
\hspace{2mm}
\begin{tabular}{lccc}
\hline
\multicolumn{1}{c}{Model} & $n_p$ $^{(*)}$ & AIC & SIC \\
\hline
L & 2 & 122.66 & 126.94 \\
Lx & 2 & 270.92 & 275.20 \\
McL & 5 & 123.01 & 130.02 \\
MBW & 5 & 135.91 & 146.62 \\
R & 1 & 189.04 & 191.18 \\
SL & 3 & 119.58 & 126.01 \\
SN & 3 & 117.80 & 124.23 \\
ST & 4 & 119.80 & 128.37 \\
TCWG & 4 & 134.89 & 143.46 \\
TGR & 3 & 124.63 & 131.06 \\
TWL & 5 & 129.68 & 140.39 \\
W & 2 & 124.30 & 128.59 \\
WL & 4 & 129.78 & 138.35 \\
ZBLL & 3 & 146.08 & 150.28 \\
\hline
\multicolumn{4}{l}{$^{(*)}$ $n_p$: number of parameters}
\end{tabular}
\caption{Strength data: AIC and SIC values}
\label{strengthTab}
\end{table}

Fig. \ref{strengthFood} presents a histogram with 25 bins of the strength data used and the overlay of the fitted GEL-S model reached.

\begin{figure}[!ht]
\centering \includegraphics[scale=0.40]{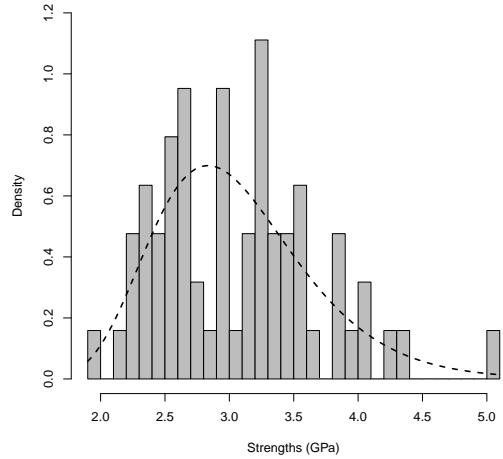} 
\caption{Strength data: Histogram and fitted GEL-S model (dashed line)}
\label{strengthFood} 
\end{figure}

\subsection{Failure Time Data}

In the third application, ball bearing failure times reported by Lieblein and Zelen \cite{LiebleinZelen1956} are taken into account.
These authors presented the results of manufacturers' tests of endurance of 213 batches of ball bearings, giving nearly 5000 deep-groove ball bearings.
Nevertheless these authors modeled each one of these batches, in the literature one of these batches has become ``one of the most frequently used data sets in literature for illustrating the applications of lifetime distributions'' in words of Yang \cite{Yang2001}.
This data set that contains 25 observations corresponds to the one described in the page 286 in \cite{LiebleinZelen1956}.
As Caroni remarks \cite{Caroni2002}, the original data are censored but this data feature has been forgotten, considering currently, then, that such data are uncensored.
Another fact on this data set is that commonly only 23 of its observations are used.

In this application, 23 failure times typically used in practice are examined, and they are assumed uncensored.
Notice that these data include 4 censored observations. These times are in millions of revolutions.
The MLEs for the parameters of the GEL-S distribution for these data and the corresponding reached log-likelihood are presented in Tab. \ref{ballGELS}.
Standard errors in brackets.

\begin{table}[!ht]
\centering
\begin{tabular}{cccc}
\hline
$k$ & $\hat{\alpha}^2$ & $\hat{\gamma}$ & $-l$ \\
\hline
27 (fixed) & $2.7918_{(0.3591)}$ & $0.4063_{(0.0072)}$ & 112.99 \\
\hline
\end{tabular}
\caption{Fit of failure time data using the GEL-S distribution}
\label{ballGELS}
\end{table}

As mentioned above, Lieblein and Zelen's data are very popular.
Applications concerning their modeling using distributions follows.
Gupta and Kundu \cite{GuptaKundu1999} examined those data using the gamma, Weibull and generalized exponential (GE) distributions.
These authors reported their log-likelihoods.
Kundu and Gupta \cite{KunduGupta2006} fitted the data to the Weibull distribution and provided its log-likelihood.
Mead et al. \cite{MeadAfifyHamedaniGhosh2017} also fitted the distributions mentioned in the previous sub-section and gave its log-likelihood associated to each model.
Ghitany et al. \cite{GhitanyAlJarallahBalakrishnan2013} examined these data using the exponentiated exponential (EE), exponentiated Rayleigh (ER) and exponentiated Pareto (EP) distributions, and presented their log-likelihoods.
Lawless \cite{Lawless} gave the fit of data using the log-normal distribution.
For all these distributions their AIC and SIC values are showed in Tab. \ref{ballGELS}.
In this table the corresponding values for the GEL-S model based on the parameters indicated in Tab. \ref{ballGELS} are also included.

In Tab. \ref{ballGELS}, lower AIC and SIC values preferred to choose a model are highlighted.
These outputs thus show that according to the AIC and the SIC the GEL-S model is favorite overall.

\begin{table}[!ht]
\centering
\begin{tabular}{lccc}
\hline
\multicolumn{1}{c}{Model} & $n_p$ $^{(*)}$ & AIC & SIC \\
\hline
GEL-S & 2 & $\mathbf{229.98}$ & $\mathbf{232.25}$ \\
Gamma & 3 & 231.70 & 235.10 \\
Weibull & 3 & 231.95 & 235.35 \\
GE & 3 & 231.53 & 234.93 \\
Log-normal & 2 & 230.20 & 232.47 \\
\hline
\multicolumn{4}{l}{$^{(*)}$ $n_p$: number of parameters}
\end{tabular}
\caption{Failure time data: AIC and SIC values}
\label{ballTab}
\end{table}

Fig. \ref{ballFood} presents a histogram with 10 bins of the failure time data used and the overlay of the fitted GEL-S model reached.

\begin{figure}[!ht]
\centering \includegraphics[scale=0.40]{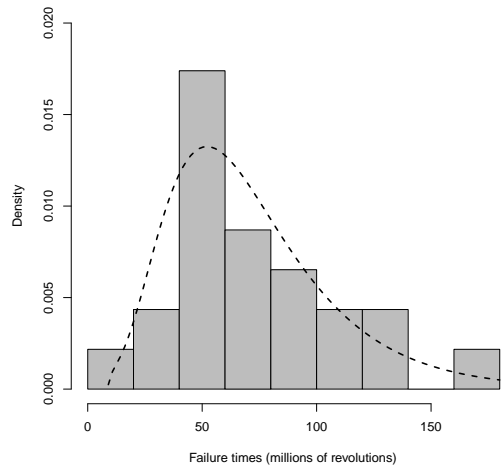} 
\caption{Failure time data: Histogram and fitted GEL-S model (dashed line)}
\label{ballFood} 
\end{figure}

\section{Discussion and Conclusion}
\label{sec6}

In this paper, a new right-skewed three-parameter distribution, with support $(\alpha,\infty)$ for some $\alpha\geq0$ and with probability density function showing exponential decays at its both tails, was introduced.
We called this distribution the generalized exponential log-squared (GEL-S) distribution.
The proposed original distribution had the limitation that one of its parameters, $k$, was not easily tractable as a continuous parameter, but it was tractable when it took non-negative integers.
This led to a reformulation of the proposed distribution by limiting $k$ to take non-negative integers.
We showed that the GEL-S distribution is close to well-known distributions such as the two-parameter and three-parameter log-normal and gamma distributions, but the new one does not generalizes any of these distributions.
Statistical properties of the GEL-S distribution were analyzed.
Closed forms for the $n$th moment and for statistics as the mean, variance, skewness, and kurtosis were provided.
Also, the mode and quantile function were studied.
The maximum likelihood method (MLE) for estimating the parameters of the distribution GEL-S was proposed, but it cannot be applied using derivatives since one of its parameters is not continuous.
This led to 
the formulation of a strategy that still applies derivatives.
The resulting procedure consisted in fixing k and thereafter, computing derivatives with respect to the other parameters.
Simulations conducted to assess the performance of the above-strategy for estimating parameters were performed, finding that for small samples the true parameters could not be recovered.
Despite this last issue, applications performed on three well-known real light-tailed and right-skewed data sets related to different domains showed that the new distribution is a reasonable alternative to other natural competitors.
Thus, the new distribution seems to be a promising model for representing light-tailed and right-skewed data.

We also give examples in which the arctangent distribution is a reasonable alternative to other common lifetime distributions

\appendix

\section{Proofs}
\label{Proofs}


\begin{proof}[Deduction of $C$ given in (\ref{deff})]
Noting that
\begin{eqnarray*}
\int_\alpha^\infty x^k\,e^{-\frac{1}{2\gamma^2}\left(\log(x-\alpha)\right)^2}dx
 & = & \int_0^\infty (y+\alpha)^k\,e^{-\frac{1}{2\gamma^2}\left(\log y\right)^2}dy,\quad y=x-\alpha \\
 & = & \int_0^\infty \sum_{i=0}^k{{k}\choose{i}}y^i\alpha^{k-i}\,e^{-\frac{1}{2\gamma^2}\left(\log y\right)^2}dy \\
 & = & \sum_{i=0}^k{{k}\choose{i}}\alpha^{k-i}\int_{-\infty}^\infty e^{-\frac{1}{2\gamma^2}z^2+(i+1)z}dz,\quad z=\log y \\
 & = & \sum_{i=0}^k{{k}\choose{i}}\alpha^{k-i}e^{\frac{1}{2}\gamma^2(i+1)^2}\int_{-\infty}^\infty e^{-\frac{1}{2}\left(\frac{z}{\gamma}-\gamma(i+1)\right)^2}dz \\
 & = & \sqrt{2\pi}\gamma\sum_{i=0}^k{{k}\choose{i}}\alpha^{k-i}e^{\frac{1}{2}\gamma^2(i+1)^2},\quad u=\frac{z}{\gamma}-\gamma(i+1),
\end{eqnarray*}
it follows
$$
1=\int_\alpha^\infty Cx^k\,e^{-\frac{1}{2\gamma^2}\left(\log(x-\alpha)\right)^2}dx=C\sqrt{2\pi}\gamma\sum_{i=0}^k{{k}\choose{i}}\alpha^{k-i}e^{\frac{1}{2}\gamma^2(i+1)^2},
$$
and $C$ is then deduced.
\end{proof}


\begin{proof}[Deduction of $F$ given in (\ref{defF})]
Noting that, for $x>\alpha$,
\begin{eqnarray*}
\int_\alpha^x w^k\,e^{-\frac{1}{2\gamma^2}\left(\log(w-\alpha)\right)^2}dw
 & = & \int_0^{x-\alpha} (y+\alpha)^k\,e^{-\frac{1}{2\gamma^2}\left(\log y\right)^2}dy,\quad y=w-\alpha \\
 & = & \int_0^{x-\alpha} \sum_{i=0}^k{{k}\choose{i}}y^i\alpha^{k-i}\,e^{-\frac{1}{2\gamma^2}\left(\log y\right)^2}dy \\
 & = & \sum_{i=0}^k{{k}\choose{i}}\alpha^{k-i}\int_{-\infty}^{\log({x-\alpha})} e^{-\frac{1}{2\gamma^2}z^2+(i+1)z}dz,\quad z=\log y \\
 & = & \sum_{i=0}^k{{k}\choose{i}}\alpha^{k-i}e^{\frac{1}{2}\gamma^2(i+1)^2}\int_{-\infty}^{\log({x-\alpha})} e^{-\frac{1}{2}\left(\frac{z}{\gamma}-\gamma(i+1)\right)^2}dz \\
 & = & \sqrt{2\pi}\gamma\sum_{i=0}^k{{k}\choose{i}}\alpha^{k-i}e^{\frac{1}{2}\gamma^2(i+1)^2}\Phi\left(\frac{\log({x-\alpha})}{\gamma}-\gamma(i+1)\right), \\
 & & \qquad u=\frac{z}{\gamma}-\gamma(i+1).
\end{eqnarray*}
we have that, for $x>\alpha$,
$$
F(x)=\int_\alpha^x f(w)dw
=C\sqrt{2\pi}\gamma\sum_{i=0}^k{{k}\choose{i}}\alpha^{k-i}e^{\frac{1}{2}\gamma^2(i+1)^2}\Phi\left(\frac{\log({x-\alpha})}{\gamma}-\gamma(i+1)\right),
$$
and the deduction of $F$ follows.
\end{proof}


\begin{proof}[Deduction of the $n$th moment in (\ref{momn})]
Let $n=0,1,2,\ldots$.
Noting that
$$
E\big[X^n\big] = C\,\int_a^\infty x^{n+k}\,e^{-\frac{1}{2\gamma^2}\left(\log(x-\alpha)\right)^2}dx,
$$
then following a procedure as the one applied to deduce $C$ given in (\ref{deff}) but considering $n+k$ instead of $k$ gives
$$
E\big[X^n\big] = \sqrt{2\pi}\gamma C\,\sum_{i=0}^{n+k}{{n+k}\choose{i}}\alpha^{n+k-i}e^{\frac{1}{2}\gamma^2(i+1)^2}.
$$
\end{proof}


\begin{proof}[Proof of Proposition \ref{PropMode}]
The result follows by derivating $f$ given by (\ref{deff}) and solving the quation $f'(x)=0$, i.e.
$$
C\left(kx^{k-1}e^{-(2\gamma^2)^{-1}\left(\log(x-\alpha)\right)^2}-\frac{1}{\gamma^2}x^{k}\frac{\log(x-\alpha)}{x-\alpha} e^{-(2\gamma^2)^{-1}\left(\log(x-\alpha)\right)^2}\right)=0,
$$
which implies, by $x>\alpha\geq0$,
\begin{equation}\label{eqMode}
x\,\log(x-\alpha)=k\gamma^2(x-\alpha).
\end{equation}
If $\alpha=0$, then $x=0$ is a solution of (\ref{eqMode}). Since the left-side of this equation is a convex function with derivative $\log x+1$, and the right-side of the equation is a right line with slope $k\gamma^2$, there is a second non-negative solution.
In this case $x=0$ is not a mode since
$$
\lim_{x\to0^+}f(x)=0,
$$
so the only one positive solution is the mode.

Assuming $\alpha>0$, the left-side of (\ref{eqMode}) has derivative, for $x>\alpha$,
$$
\log(x-\alpha)+\frac{x}{x-\alpha},
$$
implying that $x\,\log(x-\alpha)$ is increasing and satisfying
$$
\lim_{x\to\alpha^+}x\,\log(x-\alpha)=-\infty,\quad
\lim_{x\to\infty}x\,\log(x-\alpha)=\infty,
$$
whereas right-side of (\ref{eqMode}) has derivative $k\gamma^2$, implying that $k\gamma^2(x-\alpha)$ and satisfying
$$
\lim_{x\to\alpha^+}k\gamma^2(x-\alpha)=0.
$$
Hence (\ref{eqMode}) has an only one positive solution that corresponds to the mode of $X$.

The claims of the proposition then follow.
\end{proof}





\end{document}